\numberwithin{equation}{section}
\date{\today}
\newcommand{\q}{\qquad}
\newcommand\x{\mathbf{x}}
\newcommand{\y}{\mathbf{y}}
\newcommand{\ft}{\mathfrak{t}}
\newcommand{\R}{\mathbb{R}}
\newcommand{\bbS}{\mathbb{S}}
\newcommand{\Con}{C_0^\infty}
\newcommand{\grad}{\nabla}
\newtheorem{Lemma}{Lemma}
\newtheorem{Theorem}{Theorem}
\newtheorem{Corollary}{Corollary}
\newtheorem{Proposition}{Proposition}
\newtheorem{Example}{Example}
\begin{document}

\title[Spectral Properties]{Spectral Properties of Some Degenerate
Elliptic Differential Operators}

\author[Roger T.~Lewis]{Roger T. Lewis}

\address{Department of Mathematics\\
         University of Alabama at Birmingham\\
         Birmingham, AL 35294-1170\\
         USA}
\email{lewis@math.uab.edu}

\keywords{Essential spectrum, Discrete spectrum, Hardy inequality,
Elliptic Operators, Distance function}

\subjclass{Primary 47F05; 47B25; Secondary 35P15, 35J70}

\dedicatory{This paper is dedicated to David Edmunds on his 80th
birthday and to\newline Desmond Evans on his 70th birthday.}

\begin{abstract}
{In this paper we extend classical criteria for determining lower
bounds for the least point of the essential spectrum of
second-order elliptic differential operators on domains
$\Omega\subset\R^n$ allowing for degeneracy of the coefficients on
the boundary. We assume that we are given a sesquilinear form and
investigate the degree of degeneracy of the coefficients near
$\partial\Omega$ that can be tolerated and still maintain a
closable sesquilinear form to which the First Representation
Theorem can be applied. Then, we establish criteria characterizing
the least point of the essential spectrum of the associated
differential operator in these degenerate cases. Applications are
given for convex and non-convex $\Omega$ using Hardy inequalities,
which recently have been proven in terms of the distance to the
boundary, showing the spectra to be purely discrete.}
\end{abstract}

\maketitle

The classical criterion for the least point of the essential
spectrum was given by Persson~\cite{P} for a Schr\"odinger
operator
$$
-\Delta + q(\x),\q \x\in\Omega,
$$
with the only singularity being at infinity, assuming Dirichlet
boundary conditions on $\Omega$ and assuming $q$ to be bounded
below at infinity. For $q$ bounded below at infinity and near
$\partial \Omega$, Edmunds and Evans~\cite{EE2} extended this
result to include singularities on the boundary $\partial\Omega$
showing that ``if $q\in L^2_{loc}(\Omega)$ and the negative part
of $q$ behaves itself locally, then the essential spectrum" of the
Friedrichs extension of the operator ``is only influenced by the
behaviour of $q$ at $\partial\Omega$ and at infinity in the
respective cases." Conditions (\ref{EEEE}) and (\ref{Boundb2})
below give a mathematical description of the requirement that
``$q$ behaves itself locally." Related techniques were used in
\cite{L} to establish conditions for a purely discrete spectra of
second order elliptic differential operators in weighted $L^2$
spaces including mixed boundary conditions. While still assuming
that $q$ is bounded below near singularities on $\partial\Omega$
or at $\infty$, Evans and Lewis~\cite{EL} used techniques
developed in \cite{EE2} to study even-order elliptic differential
operators in weighted $L_w^2(\Omega)$ spaces with emphasis upon
the criteria for the finiteness or infiniteness of the eigenvalues
below the essential spectrum. We refer to that paper for many
other references to related work.

Edmunds and Evans~\cite{EE3} study the Neumann operator generated
by the degenerate elliptic operator
$$
-div  (d(\x)^{2\mu}\grad\ )+d(\x)^{-2\theta},\q \mu, \theta\ge 0,
$$
on a proper open subset $\Omega\subset\R^n$ where $d(\x):=
dist(\x,\partial\Omega)$. They present upper and lower estimates
for the eigenvalue counting function as well as examining the
embedding properties for associated spaces.

In this paper we study second-order elliptic sesquilinear forms
that give rise to differential operators whose coefficients may
``blow-up" near parts of $\partial\Omega$ including some cases in
which the potential diverges to negative infinity near the
boundary. Applications are given when the coefficients are
approximated by the distance function $d(\x)$ near
$\partial\Omega$.

We follow and abbreviate the structure established in \cite{EL},
but without the introduction of weights or higher-order cases.
Those extensions should be clear from \cite{EL} and the
presentation in this paper.

\section{Introduction}

Let $\Omega\subset \R^n$ be open and connected. Throughout this
paper $\|u\|:=\|u\|_{L^2(\Omega)}$. If $\Omega$ is unbounded then
$\infty$ is considered to be on the boundary  of $\Omega$ in the
sense of a one-point compactification of $\R^n$. The finite points
of the boundary are denoted by $\partial \Omega$. Outside some set
$S$, which contains the singular part of $\partial\Omega$, we assume
that $\partial \Omega$ has a normal in order that certain boundary
conditions are met. If $\Omega$ is unbounded then
$\{\infty\}\subseteq S$, but the emphasis here is upon the part of
$S$ on $\partial\Omega$. The finite part of the singular set
$S\setminus \{\infty\}$ is assumed to be a closed subset of
$\partial\Omega$. Let the singular and regular parts of the boundary
be defined by
$$
\Gamma_S:= N_S\cap\partial\Omega \ \ \ \ \text{and}\ \ \
\Gamma_R:=\partial\Omega\setminus \Gamma_S
$$
where $N_S$ is an open neighborhood of $S\setminus \{\infty\}$ and
$N_\infty:=\{\x:|\x|>K\}$ for some large $K$. We may assume that
$N_S\cap N_\infty=\emptyset$ for unbounded domains $\Omega$.

For an Hermitian matrix $A(\x)=(a_{ij}(\x))$, real-valued $q(\x)$,
$\x\in\Omega$, and $\sigma(s)$, $s\in\Gamma_R$, and a function
$c(s)$ that assumes either the value $1$ or $0$ for $s\in
\Gamma_R$, we are interested in differential operators of the form
$T:\mathcal{D}(T)\to L^2(\Omega)$ with
$$
Tu= \left [-\sum_{i,j=1}^n\frac{\partial}{\partial x_j}\left
(a_{ij}(\x)\frac{\partial}{\partial x_i}\right ) +q(\x)\right ],\
\ \x\in\Omega,
$$
for
$$\begin{array}{rl}
\mathcal{D}(T):=&\{u:u=\varphi \restriction_{_{\Omega}},\ \varphi\in\Con(\R^n\setminus \Gamma_S),\ Tu\in L^2(\Omega),\\
&\ \  \text{and} \
c(s)\frac{\partial\varphi(s)}{\partial\eta_A}+\sigma(s)\varphi(s)=0,\
s\in\Gamma_R\}
\end{array}
$$
where $\partial \varphi/\partial\eta_A:= <A\eta,\grad\varphi>$ and
$\eta$ is the unit outward normal on $\Gamma_R$. The coefficients
$c(s)$ and $\sigma(s)$ are assumed not to be simultaneously zero
allowing for mixed boundary conditions on
$\Gamma_R$. The case $\Gamma_S=\partial\Omega$, which requires
Dirichlet boundary conditions, is included.

In the case of sufficiently smooth coefficients for a symmetric
operator $T$ that is bounded from below, the sesquilinear form
\begin{equation}\label{FormT}
\ft[u,v]:=(Tu,v),\q \mathcal{D}(\ft):=\mathcal{D}(T),
\end{equation}
is closable, Kato~\cite{K}, Theorem VI.1.27, p.318. In the absence
of smooth coefficients, the problem can be interpreted in a weak or
variational sense initially involving only a sesquilinear form. In
that case consider the form
\begin{equation}\label{form}
\ft[u,v]:= \int_{\Omega}\left [<A(\x)\grad u,\grad
v>+qu\overline{v}\right ]d\x
+\int_{\Gamma_R}\sigma(s)u(s)\overline{v(s)}ds
\end{equation}
with domain
$$
\mathcal{D}(\ft):=\{u: u=\varphi\restriction_{_\Omega}, \varphi\in
\Con(\R^n\setminus \Gamma_S)\}.
$$
The value of $c(s)$ is implicit in (\ref{form}). At points where
$\sigma(s)=0$ Neumann conditions are implied so that $c(s)=1$
and at points where $\sigma(s)\neq 0$ there are either Dirichlet
or mixed conditions. For example see R.E.~Showalter~\cite{RES},
Chapter III, Theorem~3A and Example~4.1.

We will give conditions which guarantee that the form is bounded
below and closable. In that case the First Representation Theorem
(Kato~\cite{K}, \S VI, Theorem 2.1) guarantees a unique
self-adjoint operator $\tilde T$ associated with the closure
$\tilde {\ft}$ of $\ft$ for which $\mathcal{D}(\tilde T)\subset
\mathcal{D}(\tilde \ft)$. For forms defined by (\ref{FormT}),
$\tilde T$ is the Friedrichs extension of $T$. Once we have
established that $\ft$ is bounded below and closable, we will
assume that $\ft[u]\ge\|u\|^2$, which can be accomplished by the
addition of a positive constant to $\tilde T$ merely translating
$\sigma_e(\tilde T)$. In this case, according to the Second
Representation Theorem \cite{K}, Theorem VI-2.23, $\tilde
T^{\frac12}$ exists, $\mathcal{D}(\tilde T^{\frac12})=\mathcal
D(\tilde {\ft})$, and
\begin{equation}\label{tildet}
\tilde \ft[u,v]=(\tilde{T}^\frac12u,\tilde{T}^\frac12
v):=(u,v)_{\tilde\ft}.
\end{equation}
In this paper, we will use the Sobolev space $H^1(G)=W^{1,2}(G)$
for an open set $G\subset \R^n$, see Lieb and Loss~\cite{LL},
chapter 7.

Let $\Omega_k$, $k=1,2,\dots$, be bounded domains in $\R^n$ which
satisfy
\begin{itemize}
\item [(i)] $\Omega_k\Subset \Omega_{k+1}$; \item [(ii)]
$\overline{\Omega}\setminus S =\cup_{k=1}^\infty
(\overline{\Omega}\cap\overline{\Omega_k})$; \item [(iii)] there
is a $k_0\in\mathbb{N}$ such that
\begin{equation}\label{k0}
\overline{\Omega}\setminus \Omega_k\subset \overline{\Omega}\cap
(N_S\cup N_\infty)
\end{equation}
for all $k\ge k_0$; and
\item [(iv)] the embedding
$H^1(\Omega_k)\to L^2(\Omega_k)$ is compact for each
$k\in\mathbb{N}$.

\end{itemize}
(Recall the notation $\Omega_k\Subset \Omega_{k+1}$ indicates that
$\Omega_k$ is compactly contained in $\Omega_{k+1}$, i.e.
$\overline{\Omega}_{k}$ is compact and
$\overline{\Omega}_k\subset\Omega_{k+1}$.) This family of domains
$\{\Omega_k\}_{k=1}^\infty$ is an \emph{S-admissible family of
domains in} $\Omega$ as defined in Edmunds and Evans~\cite{EE},
p.278. Note that (iv) holds provided the Rellich embedding theorem
applies, e.g., if $\partial (\Omega\cap\Omega_k)$ has the segment
property, Agmon~\cite{A}, Theorem 3.8. In most applications
considerable flexibility in constructing each $\Omega_k$ will be
available.

Denote the maximum and minimum eigenvalue of $A(\x)$ by $\nu_A(\x)$
and $\mu_A(\x)$ respectively. The notation $f_-(\x):=
-\min\{f(\x),0\}$ and $f_+(\x):=f(\x)+f_-(\x)$ will be used. Assume
\newline
\newline \emph{Hypothesis (H)}: For each $k$, assume that
\begin{itemize}
\item[(a)] $\partial(\Omega\cap\Omega_k)$ is $C^1$; \item[(b)]
$\mu_A(\x)>0$ a.e. on $\Omega$ and $\mu_A^{-1}\in
L^\infty(\Omega\cap\Omega_k)$; \item[(c)] $q\in
L^\alpha(\Omega\cap\Omega_k)$ with
\begin{equation}\label{EEEE}
 \alpha\left\{\begin{array}{clc} =&\frac{n}{2},& n>2,\\
>&1,& n=2;\end{array}\right.
\end{equation}
\item[(d)] $\sigma_-(s)=0$ for $s\in \Gamma_R\setminus
\overline{\Omega}_{k_0}$; and \item [(e)] $\sigma\in
L^\beta(\Gamma_R)$ with
$$
\beta \left\{ \begin{array}{rlc} =&n-1,& n>2,\\ >&1,&
n=2.\end{array}\right.
$$
\end{itemize}

The next lemma is a special case of Lemma~1 of \cite{EL}. We refer
to that paper for the complete proof. It indicates the degree of
unbounded behavior of $q_-$ that is allowed locally.
\begin{Lemma}\label{LemA}
If (H) holds, then for $\epsilon>0$ and each $k\in\mathbb{N}$
there is a $K(\epsilon, k)>0$ such that
\begin{equation}\label{Boundb2}\begin{array}{rl}
\int_{\Omega\cap\Omega_k}q_-|u|^2d\x&+\int_{\Gamma_R\cap\overline{\Omega}_k}|\sigma(s)||u(s)|^2ds\\
\le& \epsilon \int_{\Omega\cap\Omega_k}<A\grad u,\grad
u>d\x+K(\epsilon,k)\int_{\Omega\cap\Omega_k}|u|^2d\x
\end{array}
\end{equation}
for all $u\in\mathcal{D}(\ft)$.
\end{Lemma}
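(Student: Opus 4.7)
The plan is to combine H\"older's inequality with the Sobolev and trace embeddings on the bounded $C^1$ domain $\Omega\cap\Omega_k$ (available by Hypothesis~(H)(a)), while extracting an arbitrarily small prefactor on the gradient term from the absolute continuity of the Lebesgue integral. The Dirichlet form on the right is accessed via Hypothesis~(H)(b): the pointwise bound $|\grad u|^2 \le \|\mu_A^{-1}\|_{L^\infty(\Omega\cap\Omega_k)}\langle A\grad u,\grad u\rangle$ converts $\|\grad u\|_{L^2}^2$ into a multiple of $\int_{\Omega\cap\Omega_k}\langle A\grad u,\grad u\rangle\,d\x$.

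For the volume term, I would apply H\"older with exponents $\alpha$ and $\alpha':=\alpha/(\alpha-1)$:
$$
\int_{\Omega\cap\Omega_k} q_-|u|^2\,d\x \;\le\; \|q_-\|_{L^\alpha(\Omega\cap\Omega_k)}\,\|u\|_{L^{2\alpha'}(\Omega\cap\Omega_k)}^2.
$$
When $n>2$, Hypothesis~(H)(c) gives $\alpha=n/2$, hence $2\alpha'=2n/(n-2)$, the critical Sobolev exponent; the embedding $H^1(\Omega\cap\Omega_k)\hookrightarrow L^{2n/(n-2)}(\Omega\cap\Omega_k)$ then bounds $\|u\|_{L^{2\alpha'}}^2$ by $C_S(\|\grad u\|_{L^2}^2+\|u\|^2)$. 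When $n=2$ we have $2\alpha'<\infty$ and the same embedding is available. To produce the small factor $\epsilon$, I split $q_-=q_-\chi_{\{q_->N\}}+q_-\chi_{\{q_-\le N\}}$: the second piece is pointwise $\le N$ and contributes $N\|u\|^2$ to $K(\epsilon,k)\|u\|^2$, while absolute continuity of $\int|q_-|^\alpha$ lets me choose $N=N(\epsilon,k)$ so large that $\|q_-\chi_{\{q_->N\}}\|_{L^\alpha}$ is as small as desired; combined with $C_S$ and $\|\mu_A^{-1}\|_{L^\infty(\Omega\cap\Omega_k)}$, this yields the prefactor $\le \epsilon$ on the $A$-form.

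The boundary integral is treated by an identical split-and-H\"older argument, with $\sigma$, $\beta$, and the trace embedding $H^1(\Omega\cap\Omega_k)\to L^{2(n-1)/(n-2)}(\partial(\Omega\cap\Omega_k))$ (available since $\partial(\Omega\cap\Omega_k)$ is $C^1$) replacing $q_-$, $\alpha$, and the Sobolev embedding; the critical trace exponent once again matches the H\"older conjugate of $\beta=n-1$, and $n=2$ is immediate. The main obstacle I anticipate is precisely the borderline nature of the exponents $\alpha=n/2$ and $\beta=n-1$: the Sobolev and trace embeddings are continuous but \emph{not} compact at these endpoints, so an Ehrling-type interpolation cannot extract the small $\epsilon$ directly, and the absolute-continuity truncation on the $L^\alpha$- and $L^\beta$-sides is what rescues the argument. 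This is also why $K(\epsilon,k)$ must be allowed to depend on both $\epsilon$ and (through $N$, the Sobolev/trace constants, and $\|\mu_A^{-1}\|_{L^\infty(\Omega\cap\Omega_k)}$) the index $k$.
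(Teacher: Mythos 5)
Your proposal is correct and follows the same route as the paper: H\"older's inequality with the conjugate pair $(\alpha,\alpha')$, the critical Sobolev and trace embeddings on the $C^1$ domain $\Omega\cap\Omega_k$, conversion of $\|\grad u\|_{L^2}^2$ into the $A$-form via $\mu_A^{-1}\in L^\infty$, and a truncation of $q_-$ (and $\sigma$) whose tail-smallness is exactly the ``Monotone Convergence Theorem'' step in the paper's one-line sketch. Your remark that the endpoint embeddings preclude an Ehrling-type compactness argument and force the truncation is an accurate description of why the argument is set up this way.
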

\begin{proof} The proof follows from the Monotone Convergent Theorem, the H\"older Inequality, and the Sobolev Inequality.
\end{proof}

\section{The main results}

When we know of the existence of $\tilde T$ we let
$\ell_e=\ell_e(\tilde T)$ denote the least point of its essential
spectrum. The following Proposition compares with Corollary~7D,
Chapter III, of R.E.~Showalter~\cite{RES}.

\begin{Proposition}\label{Main}
Assume hypothesis (H), that
$$
\nu_A(\x)\in L^\infty(\Omega\cap\Omega_k),\q k\in\mathbb{N},
$$
and that for all $k$ sufficiently large
\begin{equation}\label{Coercive}
\ft[u]+ \alpha_k \|u\|_{L^2(\Omega\cap\Omega_k)}^2\ge
c_k\|u\|_{H^1(\Omega\cap\Omega_k)}^2,\q u\in\mathcal{D}(\ft),
\end{equation}
for positive constants $\alpha_k$ and $c_k$.

If $\ft$ is bounded below and closable, then
\begin{equation}\label{lsube}\begin{array}{rl}
\ell_e:=& \inf\{\lambda:\lambda\in\sigma_e(\tilde{T})\}\\
=&\underset{k\to\infty}{\lim}\
\underset{\|u\|=1}{\inf}\left\{\ft[u]:u\in\mathcal{D}(\ft),
\text{supp\ }u\subset\Omega\setminus\Omega_k\right\}.
\end{array}
\end{equation}
\end{Proposition}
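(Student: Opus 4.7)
Denote the right-hand side of (\ref{lsube}) by $L$; the limit exists in $[-\infty,\infty]$ since the admissible test class shrinks as $k$ grows. The two inequalities $\ell_e\le L$ and $L\le\ell_e$ would be proved separately, in the style of Persson's theorem, working at the level of the form and using the compact-embedding hypothesis (iv) together with the coercivity (\ref{Coercive}).

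\emph{For $\ell_e\le L$:} Assume $L<\infty$. Fix $\epsilon>0$ and pick $u_j\in\mathcal{D}(\ft)$ with $\|u_j\|=1$, $\ft[u_j]\le L+\epsilon$ and $\supp u_j\subset\Omega\setminus\Omega_{k_j}$ for some $k_j\to\infty$. Since each $u_j$ is the restriction of a $\Con(\R^n\setminus\Gamma_S)$ function, its support in $\overline\Omega$ is compact and, by hypothesis (ii), contained in some $\overline\Omega_{k'}$; passing to a subsequence with $k_{j+1}>k'$, the supports become pairwise disjoint inside $\Omega$. Disjointness forces $\ft[u_j,u_i]=0$ for $i\ne j$ (the bulk integrand $\langle A\grad u_j,\grad u_i\rangle + qu_j\overline{u_i}$ and the boundary integrand vanish pointwise), so on the infinite-dimensional span $V:=\mathrm{span}\{u_j\}\subset\mathcal{D}(\tilde\ft)$ one has $\tilde\ft[u]\le(L+\epsilon)\|u\|^2$. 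The min-max characterization of $\ell_e(\tilde T)$ then yields $\ell_e\le L+\epsilon$, and $\epsilon\downarrow 0$ finishes this direction.

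\emph{For $L\le\ell_e$:} Assume $\ell_e<\infty$. Closedness of $\sigma_e(\tilde T)$ places $\ell_e$ in $\sigma_e(\tilde T)$, and Weyl's criterion produces $(u_n)\subset\mathcal{D}(\tilde T)$ with $\|u_n\|=1$, $u_n\rightharpoonup 0$ in $L^2(\Omega)$ and $\ft[u_n]=(\tilde T u_n,u_n)\to\ell_e$; a diagonal approximation using density of $\mathcal{D}(\ft)$ in $(\mathcal{D}(\tilde\ft),\|\cdot\|_{\tilde\ft})$ lets me take $u_n\in\mathcal{D}(\ft)$. Fix $k$, choose $\zeta_k\in\Con(\Omega_{k+1})$ with $\zeta_k\equiv 1$ on $\Omega_k$, and set $v_n:=(1-\zeta_k)u_n\in\mathcal{D}(\ft)$, so $\supp v_n\subset\Omega\setminus\Omega_k$. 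The coercivity (\ref{Coercive}) plus boundedness of $\ft[u_n]$ shows that $\{u_n\}$ is bounded in $H^1(\Omega\cap\Omega_{k+1})$; by (iv) and $u_n\rightharpoonup 0$ it follows that $u_n\to 0$ strongly in $L^2(\Omega\cap\Omega_{k+1})$. Expanding $\ft[v_n]-\ft[u_n]$ splits the error into three pieces on $\Omega\cap\Omega_{k+1}$: (a) the non-positive principal term $-\int(1-(1-\zeta_k)^2)\langle A\grad u_n,\grad u_n\rangle\,d\x$, (b) cross and gradient-of-cutoff terms that are $o(1)$ by Cauchy--Schwarz (pairing the bounded $\grad u_n$ in $L^2(\Omega_{k+1})$ against $u_n\to 0$ in $L^2(\Omega_{k+1})$), and (c) modifications of $\int q|u|^2$ and $\int\sigma|u|^2$, which Lemma~\ref{LemA} (plus a Hölder/Sobolev bounded-approximation argument for $q_+$) dominates by $\epsilon\int\langle A\grad u_n,\grad u_n\rangle d\x + C_\epsilon\|u_n\|_{L^2(\Omega_{k+1})}^2 + o(1)$, and so are $o(1)$ after sending $n\to\infty$ and then $\epsilon\downarrow 0$. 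This gives $\limsup_n\ft[v_n]\le\ell_e$ and $\|v_n\|\to 1$, so $v_n/\|v_n\|$ is admissible for the $k$th infimum in (\ref{lsube}) with value $\le \ell_e+o(1)$; letting $k\to\infty$ yields $L\le\ell_e$.

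\emph{Main obstacle.} The cutoff step is where the analysis is delicate: because $q_-$ and $|\sigma|$ are only assumed locally $L^\alpha$/$L^\beta$ and may blow up toward $\Gamma_S$, the localization errors produced by $\grad\zeta_k$ and by the differences $(1-(1-\zeta_k)^2)q$, $(1-(1-\zeta_k)^2)\sigma$ cannot be controlled by $L^\infty$ norms of the coefficients. Hypothesis (H) and Lemma~\ref{LemA} are tailored exactly so that these local errors are absorbed into a small fraction of the gradient energy plus a constant times $\|u_n\|_{L^2(\Omega_{k+1})}^2$, which vanishes by the compact embedding (iv); the positivity of $A$ together with $(1-\zeta_k)^2\le 1$ then gives the principal-part error the correct sign.
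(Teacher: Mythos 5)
Your plan is correct, and the computational core — the cutoff estimate on $\Omega_{k+1}\setminus\Omega_k$, absorbing localization errors via Lemma~\ref{LemA}, the coercivity (\ref{Coercive}), and the compact embedding (iv) — is exactly the calculation the paper carries out. The routes diverge only in the framing: the paper reduces (\ref{lsube}) to verifying the abstract Persson-type criterion $(\mathcal{A})$ cited from p.~476 of Edmunds and Evans~\cite{EE}, so that only what you call the ``$L\le\ell_e$'' direction needs explicit estimates; you instead give a self-contained two-inequality proof. Your ``$\ell_e\le L$'' direction (pairwise-disjoint supports, vanishing cross-terms in $\ft$, min--max) is a standard Glazman/Persson argument that the paper outsources to \cite{EE}; your ``$L\le\ell_e$'' direction uses a Weyl singular sequence, density of $\mathcal{D}(\ft)$ in $H(\tilde\ft)$, and then the same decomposition of $\ft[(1-\zeta_k)u_n]-\ft[u_n]$ into a nonpositive principal part, cutoff/cross terms handled by Cauchy--Schwarz, and $q,\sigma$-modifications handled by Lemma~\ref{LemA}, with the compact embedding sending $\|u_n\|_{L^2(\Omega\cap\Omega_{k+1})}\to 0$. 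What your version buys is self-containment; what the paper's version buys is brevity. Two small remarks: (1) in your step~(c) the ``Hölder/Sobolev bounded-approximation argument for $q_+$'' is unnecessary — writing the $q$-modification as $-\int q(2\zeta_k-\zeta_k^2)|u_n|^2\,d\x$ and splitting $q=q_+-q_-$, the $q_+$ contribution is nonpositive and may be discarded, so only $q_-$ needs Lemma~\ref{LemA}, exactly as in the paper's bound via $(2-\phi)\phi\,q_-$; (2) in the ``$\ell_e\le L$'' direction you should ensure the supports are disjoint in $\overline{\Omega}$, not merely in $\Omega$, so that the boundary integrals $\int_{\Gamma_R}\sigma\,u_i\overline{u_j}\,ds$ also vanish; this follows from $\supp u_j\subset\overline{\Omega}_{k_j'}\Subset\Omega_{k_{j+1}}$ once the indices are chosen as you indicate.
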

\begin{proof}  It will suffice to show that the following holds (see p.476 of \cite{EE}):
\begin{itemize}
\item [($\mathcal{A}$)] For each $k\in \mathbb{N}$ large enough
and $\phi\in \Con(\R^n\setminus \Gamma_S)$ such that
\begin{equation}\label{CutoffconditionAaaB}
\phi(\x)=\left\{\begin{array}{cc} 1, & \x\in \Omega_k,\\ 0, &
\x\notin \Omega_{k+1},   \end{array}\right.
\end{equation}
with $0\le \phi \le 1$, we have
\begin{itemize}
\item [(i)] $\phi v\in\mathcal D(\ft)$ for every $v\in
\mathcal{D}(\ft)$ and \item [(ii)] if $v_\ell\in \mathcal{D}(\ft)$
with $\|v_\ell\|_{\tilde \ft}=1$ and $v_\ell \rightharpoonup 0$ in
the Hilbert space $H(\tilde{\ft}):=(\mathcal{D}(\tilde
\ft);\|\cdot\|_{\tilde \ft})$, then
$$
\|(1-\phi)v_\ell\|_{\tilde \ft}^2\le 1+o(1)\ \ \text{as
$\ell\to\infty$}.
$$

\end{itemize}

\end{itemize}
Part $(\mathcal{A})$(i) is immediate.

Since $\ft$ is bounded below, without loss of generality, we may
assume that $\ft\ge 1$ on $\mathcal{D}(\ft)$ as discussed above.
Therefore (\ref{tildet}) holds.

For all $u\in\mathcal{D}(\ft)$ and any $\phi$ satisfying
(\ref{CutoffconditionAaaB})
$$\begin{array}{rl}
\|(1-& \phi)u\|_{\tilde \ft}^2-\int_{\Gamma_R}\sigma |(1-\phi)u|^2ds\\
=&\int_{\Omega\setminus\Omega_k}\left\{<A\grad (1-\phi)u,\grad (1-\phi)u>+q|(1-\phi)u|^2\right\}dx\\
= &\int_{\Omega\setminus \Omega_k} \left \{(1-\phi)^2<A\grad u,\grad u> +q|u|^2-(2-\phi)\phi\ q|u|^2\right \}d\x\\
&2\int_{(\Omega\cap\Omega_{k+1})\setminus \Omega_k}\text{Re}\left \{<A^{1/2}(1-\phi)\grad u,A^{1/2}u\grad (1-\phi)>\right\}\\
&+\int_{(\Omega\cap\Omega_{k+1})\setminus\Omega_k}<A\grad\phi,\grad\phi>|u|^2d\x\\
\le &\int_{\Omega\setminus \Omega_k} \left \{(1-\phi)^2<A\grad u,\grad u> +q|u|^2+(2-\phi)\phi\ q_-|u|^2\right \}d\x\\
&-2\int_{(\Omega\cap\Omega_{k+1})\setminus \Omega_k}(1-\phi)\text{Re}\left \{<A^{1/2}\grad u,A^{1/2}\grad \phi>\overline{u}\right\}\\
&+\int_{(\Omega\cap\Omega_{k+1})\setminus\Omega_k}<A\grad\phi,\grad\phi>|u|^2d\x\\
\le &\int_{\Omega} \left [<A\grad u,\grad u>+q|u|^2\right ]d\x +\int_{\Omega\cap\Omega_{k+1}} q_-|u|^2d\x\\
&+\delta\int_{(\Omega\cap\Omega_{k+1})\setminus \Omega_k}<A\grad u,\grad u>d\x\\
&+(1+\delta^{-1}) \int_{(\Omega\cap\Omega_{k+1})\setminus \Omega_k}<A\grad\phi,\grad\phi>|u|^2d\x\\
\end{array}
$$
for $\delta>0$. Similarly,
$$
\int_{\Gamma_R}\sigma
(1-\phi)^2|u|^2ds=\int_{\Gamma_R}\sigma|u|^2ds-\int_{\Gamma_R\cap
\overline{\Omega}_{k+1}}\phi(2-\phi)\sigma|u|^2ds.
$$
Since $\nu_A\in L^\infty(\Omega\cap\Omega_k)$ and (\ref{Boundb2})
holds for each $k$, it then follows that
$$\begin{array}{rl}
\|(1-\phi)u\|_{\tilde \ft}^2 \le \|u\|^2_{\tilde \ft} &+\delta'
\int_{\Omega\cap\Omega_{k+1}}<A\grad u,\grad u> d\x\\
& +C(\delta',k)
\int_{\Omega\cap\Omega_{k+1}}|u|^2d\x,\\
\end{array}
$$
for an arbitrarily small $\delta'$ and $C(\delta',k)>0$. According
to the coercivity requirement (\ref{Coercive}) and the fact that
$\nu_A\in L^\infty(\Omega\cap\Omega_k)$ for each $k\in\mathbb{N}$
$$
\|(1-\phi)u\|_{\tilde \ft}\le (1+\frac{\epsilon}{c_{k+1}})
\|u\|_{\tilde
\ft}^2+(C(\delta',k)+\frac{\alpha_k}{c_{k+1}})\|u\|^2_{L^2(\Omega\cap
\Omega_{k+1})}
$$
for an arbitrarily small $\epsilon$.

As in $(\mathcal{A})$(ii) suppose that $\{v_\ell\}\subset
\mathcal{D}(\ft)$ satisfies $\|v_\ell\|_{\tilde t}=1$ and
$v_\ell\rightharpoonup 0$ in $H(\tilde \ft)$. We have that
$$
\|(1-\phi)v_\ell\|_{\tilde t}^2\le
1+\epsilon'+C'(\delta',k)\|v_\ell\|^2_{L^2(\Omega\cap
\Omega_{k+1})}.
$$

By (\ref{Coercive}) and the fact that $\ft\ge 1$, it follows that
the embedding $H(\tilde \ft)\to H^1(\Omega\cap\Omega_{k+1})$ is
continuous. Since $H^1(\Omega\cap\Omega_{k+1})\to
L^2(\Omega\cap\Omega_{k+1})$ is compact, then
$\|v_\ell\|^2_{L^2(\Omega\cap\Omega_{k+1})}= o(1)$ as
$\ell\to\infty$. Hence,
$$
\|(1-\phi)v_\ell\|_{\tilde t}^2\le 1+o(1)
$$
since $\epsilon'$ can be chosen arbitrarily small. That completes
the proof.

\end{proof}

In unbounded domains $\Omega$ we will assume that $q$ is bounded
below at infinity as in (\ref{infty}) below. When we know a priori
that $\ft$ is bounded below, we may assume without loss of
generality that for $k$ sufficiently large $q(\x)>0$ for $\x\in
(\Omega\setminus\Omega_k)\cap N_\infty$ as well as $\ft[u]\ge
\|u\|^2$, mentioned above, since the addition of a constant only
translates the spectrum.

In contrast to \cite{EL}, \cite{EE2}, and the classical criterion of
Persson~\cite{P}, we are not requiring that the potential $q$ be
bounded below in a neighborhood $N_S$ of the finite singularities.
The next theorem shows that in the case of a coefficient degenerate
on $S\cap\partial\Omega,$ the existence of a Hardy-type inequality
in a neighborhood of the singularities may be sufficient to ensure
that the form is closable and bounded below, i.e., inequality
(\ref{Bdd1}) replaces the requirement that $q$ be bounded below on
$\partial\Omega$.

\begin{Theorem}\label{closable}
Assume (H) holds and that for some $\gamma\in (0,1)$ and $k_0$
given in (\ref{k0})
\begin{equation}\label{Bdd1}
\int_{(\Omega\setminus \Omega_{k})\cap N_S}[(1-\gamma)<A\grad
u,\grad u>-q_-|u|^2]d\x\ge 0,\q u\in\mathcal{D}(\ft),
\end{equation}
for all $k\ge k_0$ and
\begin{equation}\label{infty}
\underset{k\to\infty}{\lim} \underset{\x\in
(\Omega\setminus{\Omega_k})\cap N_\infty}{ess\ \sup}q_-(\x)
=C_\infty<\infty
\end{equation}
when $\Omega$ is unbounded. Then $\ft$ is bounded below and
closable and (\ref{Coercive}) holds. Furthermore, if
$$
\nu_A\in L^\infty(\Omega\cap\Omega_k), \q k\in\mathbb{N},
$$
then $\ell_e(\tilde T)$ is given by (\ref{lsube}).
\end{Theorem}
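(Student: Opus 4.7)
The plan is to verify three things in order: that the form $\ft$ is bounded below, that the coercivity estimate \eqref{Coercive} holds, and that $\ft$ is closable. Once these are in hand, the assumption $\nu_A \in L^\infty(\Omega \cap \Omega_k)$ places us exactly in the setting of Proposition~\ref{Main}, and the formula \eqref{lsube} for $\ell_e(\tilde T)$ follows immediately.

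For the lower bound and the coercivity, the strategy is to decompose
$$
\int_\Omega q_-|u|^2\, d\x + \int_{\Gamma_R}|\sigma(s)||u(s)|^2\, ds
$$
according to the covering $\overline\Omega \setminus \Omega_{k_0} \subset \overline\Omega \cap (N_S \cup N_\infty)$ from \eqref{k0}, splitting the integrals into contributions over $\Omega \cap \Omega_{k_0}$, over $(\Omega \setminus \Omega_{k_0}) \cap N_S$, and, if $\Omega$ is unbounded, over $(\Omega \setminus \Omega_{k_0}) \cap N_\infty$. On the first piece Lemma~\ref{LemA} with a fixed small $\epsilon > 0$ absorbs both terms into $\epsilon$ times the local $A$-energy plus a constant multiple of $\|u\|^2$; hypothesis (H)(d) kills the $\sigma_-$ contribution outside $\overline{\Omega}_{k_0}$. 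On the second piece, \eqref{Bdd1} absorbs $\int q_-|u|^2$ into $(1-\gamma)$ times the $A$-energy there. On the third piece, \eqref{infty} delivers the bound $\int q_-|u|^2 \le (C_\infty + o(1))\|u\|^2$. Summing,
$$
\ft[u] \ge \eta \int_\Omega \langle A\grad u, \grad u\rangle\, d\x - M\|u\|^2
$$
for some $\eta > 0$ and finite $M$, which is the boundedness below and, upon restriction to $\Omega \cap \Omega_k$ combined with $\mu_A^{-1} \in L^\infty(\Omega \cap \Omega_k)$, yields the $H^1$-coercivity \eqref{Coercive}.

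For closability I would use a KLMN-style argument. Decompose $\ft = \ft_+ - \ft_-$ with
$$
\ft_+[u] := \int_\Omega \left[\langle A\grad u, \grad u\rangle + q_+|u|^2\right] d\x + \int_{\Gamma_R}\sigma_+|u|^2\, ds,
$$
a nonnegative symmetric form, and $\ft_- := \ft_+ - \ft$ collecting the $q_-$ and $\sigma_-$ terms. The form $\ft_+$ is closable on $\mathcal D(\ft)$: the $A$-Dirichlet part is the natural quadratic form of a nonnegative symmetric differential operator and is closable under the local ellipticity $\mu_A > 0$, while the nonnegative $q_+$-multiplication and boundary $\sigma_+$-pieces are closable nonnegative forms that simply add. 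Rerunning the three-region estimate of the previous paragraph with the $A$-energy on the right hand side replaced by the larger $\ft_+$, one obtains
$$
\ft_-[u] \le (1-\eta')\,\ft_+[u] + M'\|u\|^2
$$
for some $\eta' > 0$, i.e.\ $\ft_-$ is $\ft_+$-form bounded with relative bound strictly less than one. The KLMN theorem (Kato~\cite{K}, Theorem~VI-3.4) then gives closability of $\ft$ with form domain $\mathcal D(\ft)$.

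The principal obstacle I anticipate is the bookkeeping needed to keep the three relative bounds coming from Lemma~\ref{LemA}, \eqref{Bdd1}, and \eqref{infty} jointly strictly less than $1$; this is exactly what forces the hypothesis $\gamma \in (0,1)$ and the choice of small $\epsilon$ in Lemma~\ref{LemA}. A secondary delicate point is the verification that $\ft_+$ is genuinely closable on $\mathcal D(\ft) = \{\varphi\!\restriction_\Omega : \varphi \in \Con(\mathbb{R}^n\setminus\Gamma_S)\}$, which is standard but relies on $\mu_A > 0$ a.e.\ and $\mu_A^{-1} \in L^\infty_{\mathrm{loc}}$. With closability and coercivity secured, Proposition~\ref{Main} delivers \eqref{lsube}.
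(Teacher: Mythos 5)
Your overall architecture matches the paper's almost exactly: the decomposition $\ft = \ft_+ - \ft_-$ (the paper writes $\ft_1$ and $-\ft_1'$), the three-region split ($\Omega\cap\Omega_{k_0}$ via Lemma~\ref{LemA}, $(\Omega\setminus\Omega_{k_0})\cap N_S$ via \eqref{Bdd1}, and $(\Omega\setminus\Omega_{k_0})\cap N_\infty$ via \eqref{infty}) to establish relative $\ft_+$-boundedness with bound $<1$, and the appeal to Kato's perturbation theorem for forms to conclude closability of $\ft$ from closability of $\ft_+$. That much is sound and is exactly what the paper does.

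There is, however, a real gap in your treatment of the closability of $\ft_+$. You assert that the boundary contribution $\mathfrak{s}[u] := \int_{\Gamma_R}\sigma_+|u|^2\,ds$ is, on its own, a closable nonnegative form on $L^2(\Omega)$, and that closable nonnegative forms simply add. The second assertion is fine, but the first is false: with domain $\mathcal{D}(\ft)$, $\mathfrak{s}$ is not closable. Take a sequence $\varphi_n\in \Con(\R^n\setminus\Gamma_S)$ with $\varphi_n\equiv 1$ on $\Gamma_R$ but whose restrictions to $\Omega$ are supported in shrinking collars of $\partial\Omega$; then $\varphi_n\to 0$ in $L^2(\Omega)$, $\mathfrak{s}[\varphi_n-\varphi_m]=0$, yet $\mathfrak{s}[\varphi_n]$ stays fixed and positive. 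The trace is simply not controlled by the $L^2(\Omega)$-norm, so the boundary form cannot be separated out. The paper avoids this by proving closability of the whole of $\ft_1$ in one stroke: after establishing convergence of the gradient part (their (3.13)-type argument) and of the $q_+$-weighted $L^2$-part, it shows the boundary trace must vanish by combining \eqref{Boundb2} (which dominates $\int_{\Gamma_R\cap\overline{\Omega}_k}|\sigma||u|^2$ by the local $A$-energy plus $L^2$-norm, i.e.\ a Sobolev trace estimate) with the already-obtained gradient convergence. In short, you need the gradient piece to hold hands with the boundary piece; they cannot be closed independently. The remainder of your outline — boundedness below, coercivity from $\mu_A^{-1}\in L^\infty_{\mathrm{loc}}$, and the final invocation of Proposition~\ref{Main} — is correct and agrees with the paper.
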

\begin{proof}
We give the proof in the case that $\Omega$ is unbounded. The
proof for $\Omega$ bounded requires only slight modification.

Let
$$\begin{array}{rl}
\mathfrak{t}_1[u]:=&\int_\Omega [<A\grad u,\grad u>+q_+|u|^2]d\x+\int_{\Gamma_R}\sigma_+(s)|u(s)|^2ds,\\
\mathfrak{t}_1'[u]:=&-\int_\Omega q_-|u|^2d\x
-\int_{\Gamma_R}\sigma_-(s)|u(s)|^2ds
\end{array}
$$
with $\mathcal{D}(\ft)=\mathcal{D}(\ft_1')=\mathcal{D}(\ft_1)$ and
$\ft=\ft_1+\ft_1'$.

We first show that $\mathfrak{t}_1'$ is $\mathfrak{t}_1$-bounded
with $\mathfrak{t}_1$-bound less than 1. Then, in order to
conclude that $\ft$ is closable it will suffice to show that
$\mathfrak{t}_1$ is closable - see Kato~\cite{K}, Theorem 1.33,
p.320.

Let $k\ge k_0$ in (\ref{Boundb2}) recalling that $\sigma_-(s)=0$
for $s\in \Gamma_R\setminus \overline{\Omega}_{k_0}$ according to
(\emph{H}). Without loss of generality, we may assume that for
$\delta>0$,
$$
q_-(\x)<C_\infty+\delta,\q \x\in (\Omega\setminus
\Omega_{k_0})\cap N_\infty.
$$
Then it follows from (\ref{Bdd1}) and (\ref{Boundb2}) that for all
$u\in\mathcal{D}(\ft)$, $\epsilon\le (1-\gamma)$, and
$\alpha(\epsilon,k)\ge \max\{K(\epsilon,k), C_\infty+\delta\}+1$,
\begin{equation}\label{tboundedbB}\begin{array}{rl}
|\ft_1'[u]|\le& (1-\gamma) \int_{(\Omega\setminus\Omega_{k})\cap
N_S}<A(\x)\grad u,\grad u>d\x\\
&+\int_{(\Omega\setminus\Omega_{k})\cap N_\infty}q_-|u|^2d\x
+\int_{\Omega\cap\Omega_{k}}q_-|u|^2d\x+\int_{\Gamma_R}\sigma_-|u|^2ds\\
\le& (1-\gamma) \ft_1[u]
+(C_\infty+\delta)\int_{(\Omega\setminus\Omega_{k})\cap N_\infty}|u|^2d\x\\
&+K(\epsilon,k)\int_{\Omega\cap\Omega_{k}}|u|^2d\x\\
\le& (1-\gamma) \ft_1[u] +\alpha(\epsilon,k)\|u\|_{L^2(\Omega)}^2.
\end{array}
\end{equation}
Therefore, $\ft_1'$ has $\ft_1$-bound less than $1$.

Note that (\ref{tboundedbB}) implies the inequality
\begin{equation}\label{bddbelow}\begin{array}{rl}
\ft[u]+\alpha(\epsilon,k)\|u\|_{L^2(\Omega)}^2 \ge \gamma \ft_1[u]
\ge 0.
\end{array}
\end{equation}
Therefore, $\ft$ is bounded below.

To show that $\ft_1$ is closable in $L^2(\Omega)$, choose
$\{\varphi_n\}\subset \mathcal{D}(\ft)$ such that
\begin{equation}\label{t-convergentAAbB}
\ft_1[\varphi_n-\varphi_m]\to 0,\ \ \ \|\varphi_n\|\to 0\ \ \ \
\text{as}\ \ m,n\to\infty,
\end{equation}
i.e., $\{\varphi_n\}$ is $\ft_1$-convergent to $0$. Then, we must
show that $\ft_1[\varphi_n]\to 0$ as $n\to\infty$. First, note
that (\ref{t-convergentAAbB}) implies that
$$
\int_{\Omega}<A(\x)\grad
(\varphi_n-\varphi_m),\grad(\varphi_n-\varphi_m)>d\x\to 0,\q
m,n\to\infty.
$$
It follows as in (3.13) of \cite{EL} that
\begin{equation}\label{AconvergenceB}
\int_{\Omega}<A(\x)\grad \varphi_n,\grad\varphi_n>d\x\to 0,\q
n\to\infty.
\end{equation}

Since
$$
\ft_1[u] +\alpha(\epsilon,k_0)\|u\|^2\ge \int_\Omega q_+|u|^2d\x +
\|u\|^2
$$
then $\{\varphi_n\}$ must be a Cauchy sequence in
$L^2_{q_++1}(\Omega)$. Since this space is complete, we must have
that $\varphi_n\to\psi$ for some $\psi\in L^2_{q_++1}(\Omega)$.
But, $\varphi_n\to 0$ in $L^2(\Omega)$ implies that $\psi\equiv
0$.

Consequently, we have shown that
$$
\int_\Omega [<A\grad \varphi_n,\grad
\varphi_n>+q_+|\varphi_n|^2]d\x\to 0.
$$
We need to show that
$$
\int_{\Gamma_R}\sigma_+(s)|\varphi_n(s)|^2ds \to 0
$$
in order to complete the proof. An analysis similar to (3.17) in
\cite{EL} applies here as well since
$$
\ft_1[(\varphi_n-\varphi_m)]\ge
\int_{\Gamma_R}\sigma_+(s)|(\varphi_n-\varphi_m)|^2ds\ge 0.
$$
Hence, $\{\varphi_n\}$ is Cauchy in $L^2_{\sigma_++1}(\Gamma_R)$
and converges to a $v\in L^2_{\sigma_++1}(\Gamma_R)$. By
(\ref{Boundb2}) and (\ref{AconvergenceB}) we conclude that $v=0$
on $\Gamma_R\cap\overline{\Omega}_k$ for each $k$. Since
$\Gamma_R=\cup_k(\Gamma_R\cap\overline{\Omega}_k)$, then $v=0$ on
$\Gamma_R$ which is what we wanted to show.

Therefore, $\ft$ is bounded below and closable. As discussed
above, it will suffice for the remainder of the proof to assume
that $\ft\ge 1$ and $q >0$ in $(\Omega\setminus\Omega_{k})\cap
N_\infty$ for $k$ large.

Since $q_-(\x)=0$ for $k$ large and $\x\in (\Omega\setminus
\Omega_k)\cap N_\infty$, it follows from (\ref{tboundedbB}) that
$$
|\ft_1'[u]|\le (1-\gamma) \ft_1[u]
+K(\epsilon,k)\int_{\Omega\cap\Omega_{k}}|u|^2d\x,\q
u\in\mathcal{D}(\ft),
$$
which implies that
$$
\ft[u]+K(\epsilon,k)\|u\|_{L^2(\Omega\cap\Omega_k)}^2 \ge \gamma
\ft_1[u] \ge \gamma \int_{\Omega\cap\Omega_k}<A\grad u,\grad
u>d\x,\ \  u\in\mathcal{D}(\ft).
$$
Since $\mu_A^{-1}\in L^\infty(\Omega\cap\Omega_k)$, then
(\ref{Coercive}) holds. If we know that
$$
\nu_A\in L^\infty(\Omega\cap\Omega_k), \q k\in\mathbb{N},
$$
then it follows from Proposition~\ref{Main} that (\ref{lsube})
holds.
\end{proof}

Note that if $q$ is bounded below by $B<0$ on
$(\Omega\setminus\Omega_{k_0})\cap N_S$ as assumed in earlier work,
e.g., \cite{EE2}, \cite{EE}, and \cite{EL}, then we may apply
Theorem~\ref{closable} to the form
$\ft[u]+|B|\int_{\Omega}\chi_{_{(\Omega\setminus\Omega_{k_0})\cap
N_S}}|u|^2d\x$.

It may be advantageous to need only show that the inequality in
(\ref{Bdd1}) holds for $u\in H_0^1((\Omega\setminus\Omega_{k_0})\cap
N_S)$. The next Theorem shows that is allowed. However, we will see
in the applications below that in some cases it is best to use
(\ref{Bdd1}) directly avoiding certain convexity requirements.

\begin{Theorem} \label{main2}
Assume hypothesis (H), that
$$
\nu_A\in L^\infty(\Omega\cap\Omega_k),\q k\in\mathbb{N},
$$
and (\ref{infty}) for $\Omega$ not bounded. If for all $\varphi\in
H_0^1((\Omega\setminus\Omega_{k_0})\cap N_S)$
\begin{equation}\label{newfact}
\int_{(\Omega\setminus \Omega_{k_0})\cap N_S}
[(1-\gamma)<A(\x)\grad \varphi,\grad \varphi >
-q_-(\x)|\varphi|^2]d\x\ge 0
\end{equation}
for some $\gamma\in (0,1)$, then (\ref{lsube}) holds.

\end{Theorem}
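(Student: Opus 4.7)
The plan is to reduce Theorem~\ref{main2} to the analysis of Theorem~\ref{closable} by showing that, although (\ref{newfact}) is assumed only for $\varphi\in H_0^1((\Omega\setminus\Omega_{k_0})\cap N_S)$, a cutoff trick promotes it to an estimate on $\int_{(\Omega\setminus\Omega_k)\cap N_S}q_-|u|^2\,d\x$ that is strong enough to replace (\ref{Bdd1}) in the proofs of boundedness-below and closability of $\ft$.

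First I would construct an appropriate cutoff $\Psi$. Using the exhaustion property (ii), pick a nested neighborhood $N_S'\Subset N_S$ of $S\setminus\{\infty\}$ and an integer $k_1\ge k_0+2$ with $\overline{\Omega}_{k_0+1}\cup(\overline{N_S\setminus N_S'}\cap\overline{\Omega})\subset\Omega_{k_1}$. Take $\chi\in\Con(N_S)$ with $\chi\equiv1$ on $\overline{N_S'}$ and $\phi\in\Con(\Omega_{k_0+1})$ with $\phi\equiv1$ on $\overline{\Omega}_{k_0}$, and set $\Psi:=(1-\phi)\chi$. Then $0\le\Psi\le1$, $\Psi\equiv0$ on $\Omega_{k_0}\cup(\R^n\setminus N_S)$, $\Psi\equiv1$ on $N_S'\setminus\overline{\Omega}_{k_0+1}$, and $\supp\grad\Psi\cap\overline{\Omega}\subset\Omega_{k_1}$. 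Since any $u\in\mathcal{D}(\ft)$ vanishes near $\Gamma_S=N_S\cap\partial\Omega$, the product $\Psi u$ is smooth with compact support in the open set $(\Omega\setminus\overline{\Omega}_{k_0})\cap N_S$, and so belongs to $H_0^1((\Omega\setminus\Omega_{k_0})\cap N_S)$.

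Next I would apply (\ref{newfact}) with $\varphi=\Psi u$, expand $\grad(\Psi u)=\Psi\grad u+u\grad\Psi$, and use the matrix Cauchy--Schwarz inequality on the cross term to obtain, for any $\epsilon>0$,
$$
\int q_-\Psi^2|u|^2\,d\x\le(1-\gamma)(1+\epsilon)\int\Psi^2<A\grad u,\grad u>d\x+C_\epsilon\int<A\grad\Psi,\grad\Psi>|u|^2\,d\x.
$$
Because $\nu_A\in L^\infty(\Omega\cap\Omega_{k_1})$ and $\supp\grad\Psi\cap\overline{\Omega}\subset\Omega_{k_1}$, the last integral is bounded by $C_1\|u\|_{L^2(\Omega\cap\Omega_{k_1})}^2$. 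Choosing $\epsilon$ so small that $(1-\gamma)(1+\epsilon)=:1-\gamma'\in(0,1)$, and noting that $(\Omega\setminus\Omega_k)\cap N_S\subset\{\Psi=1\}$ once $\Omega_{k_1}\subset\Omega_k$, one deduces
$$
\int_{(\Omega\setminus\Omega_k)\cap N_S}q_-|u|^2\,d\x\le(1-\gamma')\int_\Omega<A\grad u,\grad u>d\x+C_1\|u\|_{L^2(\Omega\cap\Omega_{k_1})}^2
$$
for all such $k$ and all $u\in\mathcal{D}(\ft)$, which is the desired substitute for (\ref{Bdd1}).

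Finally I would feed this inequality into the opening chain of estimates in the proof of Theorem~\ref{closable} with the splitting $\ft=\ft_1+\ft_1'$. Combined with (\ref{infty}) on $N_\infty$ and Lemma~\ref{LemA} on $\Omega\cap\Omega_k$, this produces $|\ft_1'[u]|\le(1-\gamma'')\ft_1[u]+\alpha(k)\|u\|^2$ for some $\gamma''>0$, so $\ft_1'$ is $\ft_1$-bounded with bound $<1$. The closability of $\ft_1$ (hence of $\ft$), the lower bound $\ft[u]+\alpha\|u\|^2\ge\gamma''\ft_1[u]\ge0$, the coercivity (\ref{Coercive}) via $\mu_A^{-1}\in L^\infty(\Omega\cap\Omega_k)$, and the concluding invocation of Proposition~\ref{Main} then carry over verbatim from Theorem~\ref{closable}, yielding (\ref{lsube}). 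The main obstacle is designing $\Psi$ so that simultaneously (i) $\Psi u\in H_0^1((\Omega\setminus\Omega_{k_0})\cap N_S)$ for every $u\in\mathcal{D}(\ft)$ --- which requires $\Psi$ to die on the parts of $\partial((\Omega\setminus\Omega_{k_0})\cap N_S)$ where $u$ need not vanish, namely $\partial\Omega_{k_0}\cap\overline{\Omega}$ and $\partial N_S\cap\overline{\Omega}$ --- and (ii) $\supp\grad\Psi$ lies inside some $\Omega_{k_1}$ on which $\nu_A$ is bounded, so that the product-rule error can be absorbed into a lower-order $\|u\|^2_{L^2(\Omega\cap\Omega_{k_1})}$ term rather than a term involving the (possibly unbounded) $\nu_A$ on $\Omega\setminus\Omega_{k_1}$.
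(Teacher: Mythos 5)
Your argument is correct and reaches the same goal as the paper's, but the localization is implemented differently. The paper uses the IMS formula with a quadratic partition of unity $\phi_1^2+\phi_2^2\equiv 1$; summing the pointwise identity (\ref{IMS}) over $j$ makes the cross-terms cancel exactly, so (\ref{newfact}) is applied to $\phi_1 u$ with no loss on the gradient term, and the small $(1+\epsilon)$-loss enters only through Lemma~\ref{LemA} applied to $q_-|\phi_2 u|^2$, with $\phi_1^2+\phi_2^2=1$ used to reassemble $q_-|u|^2$. You instead use a single cutoff $\Psi$ and absorb the cross-term $2\Re\,\Psi\bar u\langle A\grad u,\grad\Psi\rangle$ by weighted Cauchy--Schwarz, placing the $(1+\epsilon)$ directly on the gradient; this is more elementary (no IMS identity needed) and admissible because $\gamma\in(0,1)$ supplies slack. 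A minor consequence is that your surrogate for (\ref{Bdd1}) has the gradient integral over all of $\Omega$ rather than only over $(\Omega\setminus\Omega_{k_1})\cap N_S$, but this is harmless since it is still dominated by $\ft_1[u]$ in the $\ft_1$-boundedness estimate. The paper also packages its conclusion by verifying the hypotheses of Theorem~\ref{closable} for the constant-shifted form $\mathfrak{h}$, whereas you re-run the $\ft_1'$-boundedness, closability, and coercivity estimates directly; these are equivalent. One small point you should make explicit: choose $\phi\equiv1$ on an open neighborhood of $\overline{\Omega}_{k_0}$ (not merely on $\overline{\Omega}_{k_0}$) so that $\supp\Psi$ genuinely avoids $\overline{\Omega}_{k_0}$ and $\Psi u$ is compactly supported in the open set $(\Omega\setminus\overline{\Omega}_{k_0})\cap N_S$, which is what the membership $\Psi u\in H_0^1((\Omega\setminus\Omega_{k_0})\cap N_S)$ requires.
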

\begin{proof}

Recall that $N_S$ is an open neighborhood of the finite
singularities, $S\setminus\{\infty\}$, with
$\overline{\Omega}\setminus\Omega_k\subset \overline{\Omega}\cap
(N_S\cup N_\infty)$ for $k\ge k_0$. We employ a simple IMS
localization formula - see \cite{Cyconetal}, p.28. Choose
$k_2>k_1\ge k_0$. There exists $\phi_1\in C^\infty(\R^n)$ for
which
$$
\phi_1(\x)=\left\{\begin{array}{lc} 1,& \x\in
(\overline{\Omega}\setminus\Omega_{k_2})\cap N_S,\\ 0,&
\x\in\overline{\Omega}\cap\Omega_{k_1},\end{array}\right.
$$
(with the support of $\phi$ extending into $\R^n\setminus
\overline{\Omega}$ as needed) and $\phi_2$ such that
\begin{itemize}
\item $\phi_j(\x)\in [0,1]$ for $j=1,2,$ and all $\x\in\R^n$;
\item $\phi_1^2(\x)+\phi_2^2(\x)\equiv 1$ for all $\x\in\R^n$;
\item $\phi_j\in C^\infty(\R^n)$; and \item
$\sup_{\x\in\R^n}[|\grad\phi_1(\x)|^2+|\grad
\phi_2(\x)|^2]<\infty$.
\end{itemize}
Recall the pointwise identity that gives rise to the IMS
localization formula: for $u\in\mathcal{D}(\ft)$ and $j=1,2$,
\begin{equation}\label{IMS}\begin{array}{rl}
<A\grad (\phi_j u),\grad (\phi_j u)> =&\phi_j^2<A\grad u,\grad u> +<A\grad\phi_j,\grad\phi_j>|u|^2\\
&+ \Re{e} <A\grad \phi_j^2,\overline{u}\grad u>.
\end{array}
\end{equation}
Summing over $j=1,2$, and integrating yields the identity
$$\begin{array}{rl}
\ft[u]=&\sum_{j=1}^2\int_\Omega [<A(\x)\grad (\phi_j u),\grad (\phi_j u)>+q|\phi_ju|^2-<A\grad \phi_j,\grad\phi_j>|u|^2]d\x\\
&+\int_{\Gamma_R}\sigma(s)|u(s)|^2ds
\end{array}
$$
since $\phi_2(s)=1$ on $\Gamma_R$. Then
$\phi_1u\in\Con(\Omega\setminus\Omega_{k_1})$.

 It follows from the pointwise identity (\ref{IMS}) that
$$\begin{array}{rl}
\int_{(\Omega\setminus\Omega_{k_1})\cap N_S}&<A\grad u,\grad u>d\x\\
=&\sum_{j=1}^2\int_{(\Omega\setminus\Omega_{k_1})\cap N_S} [<A\grad (\phi_j u),\grad (\phi_j u)>-<A\grad \phi_j,\grad\phi_j>|u|^2]d\x\\
\ge &\int_{(\Omega\setminus \Omega_{k_1})\cap N_S} <A(\x)\grad (\phi_1 u),\grad (\phi_1 u)>d\x\\
&-C_{k_2}\int_{(\Omega_{k_2}\setminus \Omega_{k_1})\cap N_S}|u|^2d\x\\
\end{array}
$$
for
$$
C_{k_2}:=\sup_{\x\in (\Omega_{k_2}\setminus\Omega_{k_1})\cap
N_S}\sum_{j=1}^2<A\grad \phi_j,\grad\phi_j>\ <\infty
$$
since $\nu_A\in L^\infty(\Omega\cap\Omega_k)$, $k\in \mathbb{N}$.

Since (\ref{newfact}) holds for $\gamma\in (0,1)$,
$$\begin{array}{rl}
\int_{(\Omega\setminus\Omega_{k_1})\cap N_S}<A\grad u,\grad u>d\x
&+C_{k_2}\int_{(\Omega_{k_2}\setminus \Omega_{k_1})\cap N_S}|u|^2d\x\\
\ge& (1-\gamma)^{-1} \int_{(\Omega\setminus\Omega_{k_1})\cap N_S}q_-|\phi_1u|^2d\x\\
= & (1-\gamma)^{-1}[\int_{(\Omega\setminus\Omega_{k_1})\cap N_S}q_-|u|^2d\x\\
&-\int_{(\Omega_{k_2}\setminus\Omega_{k_1})\cap
N_S}q_-|\phi_2u|^2d\x].
\end{array}
$$
As in Lemma~\ref{LemA} we have that for any $\epsilon>0$ there is a
positive constant $K(\epsilon,k_2)$ such that
$$\begin{array}{rl}
\int_{(\Omega_{k_2}\setminus\Omega_{k_1})\cap N_S}q_-|\phi_2u|^2d\x]
\le &\epsilon\int_{(\Omega_{k_2}\setminus\Omega_{k_1})\cap
N_S}<A\grad u,\grad
u>d\x\\
&+K(\epsilon,k_2)\int_{(\Omega_{k_2}\setminus\Omega_{k_1})\cap
N_S}|u|^2d\x
\end{array}
$$
(see (2.9) of \cite{EL}) which implies that
$$\begin{array}{rl}
(1+\epsilon)&\int_{(\Omega\setminus\Omega_{k_1})\cap N_S}<A\grad
u,\grad u>d\x +C(\epsilon,k_2)\int_{(\Omega_{k_2}\setminus
\Omega_{k_1})\cap N_S}|u|^2d\x\\
& \ge (1-\gamma)^{-1}\int_{(\Omega\setminus\Omega_{k_1})\cap
N_S}q_-|u|^2d\x
\end{array}
$$
for $C(\epsilon,k_2):=C_{k_2}+K(\epsilon,k_2)$. Then, for $\epsilon$
chosen sufficiently small $(1+\epsilon)(1-\gamma)\in (0,1)$. Since
$k_1$ is an arbitrary integer greater than or equal to $k_0$, the
hypothesis of Theorem~\ref{closable} holds for
$$
\mathfrak{h}[u]:=\ft[u]-(1-\gamma)C(\epsilon,k_2)
\int_\Omega\chi_{_{(\Omega_{k_2}\setminus \Omega_{k_1})\cap
N_S}}|u|^2d\x,\q u\in\mathcal{D}(\ft),
$$
implying that $\mathfrak{h}[u]$ is bounded below and closable and,
as shown in the proof of Theorem~\ref{closable}, that
(\ref{Coercive}) holds for $\mathfrak{h}$. But, this implies that
$\ft$ is bounded below and closable (cf. (\ref{t-convergentAAbB}))
and (\ref{Coercive}) holds as well for $\ft$. The conclusion
follows from Proposition~\ref{Main}.
\end{proof}

With appropriate conditions required of the coefficients, inequality
(\ref{newfact}) is associated with the existence of a nonnegative
solution of the Dirichlet problem for
$$
-(1-\gamma)\ div(A(\x)\grad u)-q_-(\x)u=0
$$
on $(\Omega\setminus\Omega_{k_0})\cap N_S$, the absence of nodal
domains, and the finiteness of the negative spectrum (\cite{Al},
\cite{P},\cite{Pi}).

\begin{Corollary}\label{CorThm2}
Assume the hypothesis of Theorem~\ref{main2} and for $k\ge k_0$
define
$$
L_S[u;k]:= \int_{(\Omega\setminus \Omega_{k})\cap N_S}[<\gamma
A(\x)\grad u,\grad u>+q_+(\x)|u|^2]d\x,\q u\in\mathcal{D}(\ft).
$$
Then, for $\Omega$ bounded
$$\begin{array}{rl}
\ell_e \ge&\lim_{k\to\infty}\underset{\|u\|=1}{\inf}L_S[u;k]
\end{array}
$$
with the infimum taken over all $u\in\mathcal{D}(\ft)$ with $supp\ u
\subset (\Omega\setminus\Omega_k)\cap N_S$. If $\Omega$ is
unbounded and (\ref{infty})  holds, then
$$\begin{array}{rl}
\ell_e
\ge&\lim_{k\to\infty}\underset{\|u\|=1}{\inf}L_S[u;k]-C_\infty.
\end{array}
$$
\end{Corollary}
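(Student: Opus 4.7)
The plan is to combine Proposition~\ref{Main}'s characterization (\ref{lsube}) of $\ell_e$—available because the hypotheses of Theorem~\ref{main2} have been assumed—with a pointwise Hardy-type splitting of the integrals. Concretely, it suffices to bound $\ft[u]$ from below for an arbitrary test function $u\in\mathcal{D}(\ft)$ with $\|u\|=1$ and $\supp u\subset\Omega\setminus\Omega_k$, $k\geq k_0$, and then to pass to the infimum followed by $k\to\infty$.

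First I would exploit the disjoint decomposition $\Omega\setminus\Omega_k\subset(\Omega\cap N_S)\cup(\Omega\cap N_\infty)$, valid for $k\geq k_0$, noting that the boundary integral $\int_{\Gamma_R}\sigma|u|^2 ds$ is nonnegative because $\sigma_-\equiv 0$ on $\Gamma_R\setminus\overline{\Omega}_{k_0}$ by hypothesis $(H)(d)$ and $u$ vanishes on $\overline{\Omega}_{k_0}$. On the $N_S$-part I would apply hypothesis (\ref{newfact}) to the restriction $u\chi_{N_S}$, after verifying that it lies in $H_0^1((\Omega\setminus\Omega_{k_0})\cap N_S)$—this holds because $u\in\Con(\R^n\setminus\Gamma_S)$ vanishes near $\Gamma_S=\partial\Omega\cap N_S$, vanishes identically on $\Omega_{k_0}\subset\Omega_k$, and vanishes on $\partial N_S\cap\Omega$ thanks to the disjointness of $N_S$ from $N_\infty$. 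The result is
\[
\int_{(\Omega\setminus\Omega_k)\cap N_S}q_-|u|^2\,d\x\;\le\;(1-\gamma)\int_{(\Omega\setminus\Omega_k)\cap N_S}\langle A\nabla u,\nabla u\rangle\,d\x,
\]
which, paired with the $q_+$ term, shows that the $N_S$-part of $\ft[u]$ contributes at least $L_S[u;k]$.

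For bounded $\Omega$ one has $\Omega\cap N_\infty=\emptyset$, so $\supp u$ already lies in $(\Omega\setminus\Omega_k)\cap N_S$ and $\ft[u]\ge L_S[u;k]\ge\inf\{L_S[v;k]:\|v\|=1,\,\supp v\subset(\Omega\setminus\Omega_k)\cap N_S\}$; the first conclusion then follows upon letting $k\to\infty$. For unbounded $\Omega$ I would invoke (\ref{infty}) to obtain, for each $\epsilon>0$ and all sufficiently large $k$, the pointwise estimate $q_-(\x)\le C_\infty+\epsilon$ on $(\Omega\setminus\Omega_k)\cap N_\infty$. Discarding the nonnegative gradient and $q_+|u|^2$ terms on the $N_\infty$-part—permissible since we seek only a lower bound—its contribution to $\ft[u]$ is at least $-(C_\infty+\epsilon)\int_{(\Omega\setminus\Omega_k)\cap N_\infty}|u|^2\,d\x\ge-(C_\infty+\epsilon)$. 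Combining with the $N_S$ estimate gives $\ft[u]\ge L_S[u;k]-C_\infty-\epsilon$, and the second assertion follows by taking the infimum, passing to the limit in $k$, and letting $\epsilon\to 0$.

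The step that will require the most care is the verification that $u\chi_{N_S}$ is an admissible test function in (\ref{newfact}); this rests crucially on the disjointness $N_S\cap N_\infty=\emptyset$ together with the built-in vanishing properties of $u\in\mathcal{D}(\ft)$ near $\Gamma_S$ and on $\Omega_{k_0}$. Beyond that verification, the remainder of the proof is routine bookkeeping on the positive and negative parts of $q$ and of the boundary integral, controlled either by the Hardy-type inequality (\ref{newfact}) on $N_S$ or by the uniform bound (\ref{infty}) on $N_\infty$.
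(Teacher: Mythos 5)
Your proposal is correct and follows essentially the same path as the paper's proof: split $\ft[u]$ over the disjoint $N_S$ and $N_\infty$ pieces (with the nonnegative boundary term discarded), apply the Hardy-type hypothesis (\ref{newfact}) on the $N_S$ part to obtain $L_S[u;k]$, control the $N_\infty$ part by (\ref{infty}), and pass to the infimum and limit via (\ref{lsube}) from Proposition~\ref{Main}. You are somewhat more explicit than the paper in checking that the restriction $u\chi_{N_S}$ is a legitimate test function in $H_0^1((\Omega\setminus\Omega_{k_0})\cap N_S)$ — using the compactness of $\supp u$ and the disjointness $N_S\cap N_\infty=\emptyset$ to show it is compactly supported in that set — whereas the paper compresses this verification into the phrase ``According to Theorem~\ref{main2}''; otherwise the two arguments are the same.
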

\begin{proof} We give the proof for the case in which $\Omega$ is unbounded. The adaptation for $\Omega$ bounded is straightforward.
According to Theorem~\ref{main2}, for $k\ge k_0$ and $\varphi :=
u/\|u\|$ for $u\in \mathcal{D}(\ft)$ with $supp\ u\subset
\Omega\setminus\Omega_k$
$$\begin{array}{rl}
\ft[\varphi]\ge&\gamma\int_{(\Omega\setminus\Omega_k)\cap
N_S}<A(\x)\grad \varphi,\grad\varphi>d\x
+\int_{(\Omega\setminus\Omega_{k})\cap N_S}q_+(\x)|\varphi|^2d\x\\
&+\int_{(\Omega\setminus\Omega_{k})\cap N_\infty}[<A(\x)\grad\varphi,\grad\varphi>+q(\x)|\varphi|^2]d\x\\
\ge&\int_{(\Omega\setminus\Omega_k)\cap N_S}[\gamma <A(\x)\grad
\varphi,\grad\varphi>
+q_+(\x)|\varphi|^2]d\x\\
&-(C_\infty +\delta)\int_{(\Omega\setminus\Omega_{k})\cap
N_\infty}|\varphi|^2d\x
\end{array}
$$
for some small $\delta>0$. Therefore,
$$\begin{array}{rl}
\ell_e =&\lim_{k\to\infty}\inf_{\varphi}\ft[\varphi]\\
\ge&\lim_{k\to\infty}\inf_{\varphi}L_S[\varphi;k]-C_\infty.
\end{array}
$$
\end{proof}

\section{Applications using Hardy inequalities in $d(\x)$.}

In this section we explore applications of Theorems~\ref{closable}
\& \ref{main2} with some of the more recent results for Hardy
inequalities given in terms of the distance to the boundary of the
domain, i.e., $d(\x):=\ dist(\x,\partial\Omega)$. Weighted Hardy
inequalities in $L^2(G)$, which best suit our purposes, are of the
following form: for an open connected set $G\subset\R^n$ and $u\in
H_0^1(G)$
\begin{equation}\label{E1}
\int_G d(\x)^\beta|\grad u(\x)|^2d\x \ge \kappa(\beta)\int_G
\frac{|u(\x)|^2}{d(\x)^{2-\beta}}d\x
+\lambda(G)\int_{G}d(\x)^\alpha|u(\x)|^2d\x
\end{equation}
with $\beta<1$ and $\alpha > (\beta -2)$. Here, $\kappa(\beta)$ is
assumed to be positive for each $\beta<1$ and $\lambda(G)\ge 0$
depends upon certain geometric properties of $G$, e.g., the
diameter of $G$, the volume of $G$, etc. Several results of this
type are discussed below.

\begin{Corollary}\label{CorAppl} Assume hypothesis $(H)$, $\nu_A\in L^\infty(\Omega\cap\Omega_k)$ for all $k$,
and that for some $\beta<1$
\begin{equation}\label{INEQQ}
\mu_A(\x)\ge d(\x)^{\beta},\q \x\in
(\Omega\setminus\Omega_{k_0})\cap N_S.
\end{equation}
For $\Omega$ unbounded assume that $q_-$ is bounded below at
infinity as in (\ref{infty}). Finally, assume that (\ref{E1})
holds for some $\beta<1$ and for
$G=(\Omega\setminus\Omega_{k_0})\cap N_S$. If for some $\gamma\in
(0,1)$
\begin{equation}\label{Qeq2}
q_-(\x)\le
(1-\gamma)[\frac{\kappa(\beta)}{d(\x)^{2-\beta}}+\lambda(G)
d(\x)^\alpha],\q \x\in G,
\end{equation}
then $\ft$ is bounded below and closable and the spectrum of
$\tilde T$ is purely discrete.

\end{Corollary}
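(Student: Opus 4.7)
The plan is to apply Theorem~\ref{main2} to obtain closability, lower-boundedness, and the variational characterization (\ref{lsube}) of $\ell_e(\tilde T)$, and then to force the right-hand side of (\ref{lsube}) to be $+\infty$ by invoking the Hardy inequality (\ref{E1}) on shrinking neighborhoods of $S\cap\partial\Omega$.

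To verify (\ref{newfact}) on $G=(\Omega\setminus\Omega_{k_0})\cap N_S$, I would take $\varphi\in H_0^1(G)$. The ellipticity bound $\mu_A(\x)\ge d(\x)^\beta$ gives $\int_G<A\grad\varphi,\grad\varphi>d\x\ge\int_G d(\x)^\beta|\grad\varphi|^2 d\x$; applying (\ref{E1}) bounds this below by $\int_G[\kappa(\beta)/d(\x)^{2-\beta}+\lambda(G)d(\x)^\alpha]|\varphi|^2 d\x$, and (\ref{Qeq2}) then forces the $(1-\gamma)$-multiple of that bound to dominate $\int_G q_-(\x)|\varphi|^2 d\x$. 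This is exactly (\ref{newfact}), so Theorem~\ref{main2} delivers the closability and lower-boundedness of $\ft$ together with the formula (\ref{lsube}).

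Next I would show the right side of (\ref{lsube}) is $+\infty$. Fix a normalized $u\in\mathcal{D}(\ft)$ with $\supp u\subset\Omega\setminus\Omega_k$ for $k\ge k_0$. Since $\Gamma_S=N_S\cap\partial\Omega$ one has $\Gamma_R\cap N_S=\emptyset$, so $u$ vanishes on $\Gamma_S$ (from $\Con(\R^n\setminus\Gamma_S)$-membership) and on $\partial\Omega_{k_0}$ and $\partial N_S$ (from the support restriction together with continuity), giving $u|_G\in H_0^1(G)$. Using (\ref{newfact}) to absorb $\int q_-|u|^2$ into a $(1-\gamma)$-fraction of the gradient term and running the Hardy estimate a second time yields
$$
\ft[u]\ge\gamma\int_G<A\grad u,\grad u>d\x\ge\gamma\kappa(\beta)\int_G\frac{|u(\x)|^2}{d(\x)^{2-\beta}}d\x\ge\gamma\kappa(\beta)M_k,
$$
where $M_k:=\inf_{\x\in(\Omega\setminus\Omega_k)\cap N_S}d(\x)^{-(2-\beta)}$; in the unbounded case Corollary~\ref{CorThm2} is used to absorb the bounded-at-infinity correction $-C_\infty$.

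The main obstacle is the uniform assertion $M_k\to\infty$. For each $\delta>0$ the set $K_\delta:=\{\x\in\overline\Omega\cap\overline{N_S}:d(\x)\ge\delta\}$ is a bounded closed subset of $\overline\Omega\setminus S$ (boundedness uses $N_S\cap N_\infty=\emptyset$), hence compact; by the exhaustion property (ii) of the $S$-admissible family $\{\Omega_k\}$ it lies inside $\overline\Omega_k$ for all sufficiently large $k$. Equivalently $(\Omega\setminus\Omega_k)\cap N_S\subset\{d(\x)<\delta\}$ for such $k$, and so $M_k\ge\delta^{-(2-\beta)}$; letting $\delta\to 0$ gives $M_k\to\infty$. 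Consequently $\ell_e=+\infty$ and $\tilde T$ has purely discrete spectrum.
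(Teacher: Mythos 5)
Your proposal follows essentially the same route as the paper: derive (\ref{newfact}) from (\ref{INEQQ}), (\ref{E1}), and (\ref{Qeq2}); invoke Theorem~\ref{main2} (and hence Corollary~\ref{CorThm2}) to get closability, boundedness below, and (\ref{lsube}); then push the limit in (\ref{lsube}) to $+\infty$ by combining the weighted Hardy bound with the fact that $d(\x)$ tends uniformly to $0$ on $(\Omega\setminus\Omega_k)\cap N_S$. The one place you should be careful is in the middle step. You fix a normalized $u\in\mathcal{D}(\ft)$ with $\supp u\subset\Omega\setminus\Omega_k$ --- the class appearing in (\ref{lsube}) --- and then assert $u\restriction_G\in H_0^1(G)$ and run the Hardy chain down to $\gamma\kappa(\beta)M_k$; but for $\Omega$ unbounded such a $u$ may be supported entirely in $N_\infty$, so it need neither vanish near $\partial N_S$ nor carry unit $L^2$-mass in $G$, and the chain $\ft[u]\ge\gamma\kappa(\beta)M_k$ fails as written. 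The tidy fix, which is what the paper actually does, is to estimate $L_S[u;k]$ over the smaller class $\supp u\subset(\Omega\setminus\Omega_k)\cap N_S$ that Corollary~\ref{CorThm2} prescribes (there the $H_0^1(G)$-membership and the unit mass in $G$ are automatic), and let Corollary~\ref{CorThm2} supply the $-C_\infty$ term; merely ``absorbing the correction at the end'' does not repair the intermediate inequality. On the other hand, your compactness argument that $\sup_{(\Omega\setminus\Omega_k)\cap N_S}d(\x)\to 0$ (via $K_\delta$ compact in $\overline\Omega\setminus S$ and the exhaustion property (ii)) is a genuine, if small, improvement over the paper, which simply remarks that one ``may assume'' $d(\x)<1/k$ there by exercising the flexibility in constructing the $S$-admissible family: your version works for an arbitrary such family.
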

\begin{proof} The fact that $\ft$ is bounded below and closable follows from Theorem~\ref{closable}.
By (\ref{E1}) and (\ref{Qeq2}) the hypothesis of
Theorem~\ref{main2} holds. We may apply Corollary~\ref{CorThm2}.
For $k>k_0$
$$
L_S[u;k]\ge \int_{(\Omega\setminus\Omega_k)\cap N_S}\gamma
d(\x)^\beta |\grad u|^2d\x \ge
\gamma\kappa(\beta)\int_{(\Omega\setminus\Omega_k)\cap
N_S}\frac{|u|^2}{d(\x)^{2-\beta}}d\x
$$
according to (\ref{INEQQ}) followed by (\ref{E1}). According to
property (ii) of the S-admissible family of domains
$\{\Omega_k\}_{k=1}^\infty$ we may assume that $d(\x)<\frac{1}{k}$
for $\x\in (\Omega\setminus\Omega_k)\cap N_S$ and $k\ge k_0$. Since the infimum
in Corollary~\ref{CorThm2} is taken over all
$u\in\mathcal{D}(\ft)$ with support in
$(\Omega\setminus\Omega_k)\cap N_S$, then for $k\ge k_0$
$$
 \underset{\| u\|=1}{\inf}L_S[u;k]\ge \gamma\kappa(G)
 k^{2-\beta},\q \beta <1.
$$
Letting $k\to \infty$, we conclude that $\ell_e=\infty$ implying
that the spectrum is purely discrete.
\end{proof}

Corollary~\ref{CorAppl} indicates that if a Hardy inequality
(\ref{E1}) holds, the form $\ft$ can be bounded below and closable
even though all coefficients are degenerate at parts of the boundary
$\partial\Omega$. We review some of the earlier results in which
(\ref{E1}) holds.

For $\alpha=\beta=0$, (\ref{E1}) reduces to
\begin{equation}\label{E1a}
\int_\Omega |\grad u(\x)|^2d\x \ge \frac14\int_\Omega
\frac{|u(\x)|^2}{d(\x)^{2}}d\x
+\lambda(\Omega)\int_{\Omega}|u(\x)|^2d\x.
\end{equation}
Recent results for this inequality were motivated by work of
Brezis and Marcus in \cite{BM} who showed that for $\Omega$ convex
with $\partial\Omega\in C^2$,
$\lambda(\Omega)\ge\frac{1}{4D(\Omega)^2}$ with $D(\Omega)$
denoting the usual diameter of $\Omega$. For the ``interior
diameter" defined by $D_{int}(\Omega):=2\sup_{\x\in\Omega}d(\x)$,
Filippas, Maz'ya, and Tertikas \cite{FMT} showed that for $\Omega$
convex, $\lambda(\Omega)\ge\frac{3}{D_{int}(\Omega)^2}$.
Subsequently, Avkhadiev and Wirths \cite{AW} have shown that
$\lambda(\Omega)\ge \frac{4\lambda_0}{D_{int}(\Omega)^2}$ where
$\lambda_0\ge 0.94$. Using methods of Davies \cite{D},
M.~Hoffmann-Ostenhof, T~Hoffmann-Ostenhof, and Laptev \cite{HHL}
answered a question posed by Brezis and Marcus showing that for
convex domains $\lambda(\Omega)\ge \frac{K(n)}{4|\Omega|^{2/n}}$,
$K(n):=n^{1-2/n}|\bbS^{n-1}|^{2/n}$, in which $|\Omega|$ denotes
the volume of $\Omega$. Using similar methods, Evans and
Lewis~\cite{EL2} showed that $\lambda(\Omega)\ge
\frac{3K(n)}{2|\Omega|^{2/n}}$.

Since a ball of diameter $D_{int}(\Omega)$ must be contained in
$\Omega$, it follows that for $n=2,3$, the results for
$\lambda(\Omega)$ in the paper of Filippas, Maz'ya, and Tertikas
\cite{FMT} are comparable to those in terms of the volume improving
the inequality in the paper of M.~Hoffmann-Ostenhof,
T~Hoffmann-Ostenhof, and Laptev~\cite{HHL}. Also, there is some
advantage in the fact that the inequalities of \cite{AW}, \cite{BM}
and \cite{FMT} do not require $|\Omega|$ to be finite, e.g., $\Omega
= \omega\times \R$ with $\omega\subset\R^{n-1}$ convex. In that case
$|\Omega|=\infty$, but $D_{int}(\Omega)<\infty$ if
$D_{int}(\omega)<\infty$.

While applying some of these inequalities in
Corollary~\ref{CorAppl}, convexity may be required, but that
requirement is diminished by the fact it is needed only on
$(\Omega\setminus\Omega_{k_0})\cap N_S$ and not necessarily on
$\Omega$. In addition, a certain degree of flexibility is available
in constructing the family $\{\Omega_k\}_{k=1}^\infty$ in $N_S$.
Nevertheless, we will also be interested in inequalities not
requiring convexity.

In a domain $\Omega\subset\R^n$ the distance function $d(\x)$ is
uniformly Lipschitz continuous (cf. Gilbarg and Trudinger
\cite{GT}, \S 14.6) and consequently, differentiable almost
everywhere according to Rademacher's theorem. Moreover, if
$\Omega$ is bounded and $\partial\Omega\in C^k$, $k\ge 2$, then
for some $\delta>0$ sufficiently small, $d\in C^k(\Omega_\delta)$
in which $\Omega_\delta:=\{\x\in\Omega: d(\x)<\delta\}$ --
Lemma~14.6 of \cite{GT}. If $\Omega$ is convex, then the distance
function is superharmonic, i.e., $-\Delta d(\x)$ is a nonnegative
measure. (See Lemma~3 of \cite{AL} for a short proof). For
dimension $n=2$, $-\Delta d\ge 0$ implies that $\Omega$ is convex,
but not for $n>2$. Armitage and Kuran~\cite{AK} give an example of
a torus in dimension greater than 2, which is (obviously) not
convex, but $-\Delta d(\x)\ge 0$.

In order to accommodate weights, we give a small extension of
Theorem~3.1 of Filippas, Maz'ya, and Tertikas \cite{FMT} requiring
only a modification of their change of variable. Rather than
assuming convexity of $\Omega$ it suffices (here and in the proof of
Theorem~3.1 of \cite{FMT}) to assume the weaker condition that
$-\Delta d(\x)\ge 0$ in $\Omega$.
\begin{Theorem}\label{FMT} If $-\Delta d(\x)\ge 0$ in a domain $\Omega$, then for all $u\in H_0^1(\Omega)$,
$\beta<1$, and  $\alpha >\beta-2$
$$
\int_\Omega d(\x)^\beta|\grad u|^2d\x
-\frac{(1-\beta)^2}{4}\int_\Omega \frac{|u|^2}{d(\x)^{2-\beta}}
d\x\ge C_{\alpha,\beta} D_{int}^{\beta-(\alpha+2)}\int_\Omega
d(\x)^\alpha |u|^2d\x
$$
for a constant
$$
C_{\alpha,\beta}:=2^{\alpha -\beta}\cdot\left\{\begin{array}{cc} (\alpha+2-\beta)^2    & \alpha\in (\beta-2,-1)\\
  (1-\beta)(2\alpha +3-\beta)    & \alpha\in [-1,\infty)
  \end{array}\right..
$$
\end{Theorem}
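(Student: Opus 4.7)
The plan is to follow the method of Filippas--Maz'ya--Tertikas (\cite{FMT}, Theorem~3.1), modifying only the change of variable so as to accommodate the weight $d(\x)^\beta$ in the gradient term. First I would set
\begin{equation*}
u(\x) = d(\x)^{(1-\beta)/2}\, v(\x),
\end{equation*}
the exponent being chosen so that the Hardy constant $(1-\beta)^2/4$ emerges naturally upon expansion. Using $|\grad d|^2 = 1$ a.e., a direct calculation yields the pointwise identity
\begin{equation*}
d^{\beta}|\grad u|^2 = \frac{(1-\beta)^2}{4}\,\frac{|u|^2}{d^{2-\beta}} + \frac{1-\beta}{2}\,\grad d \cdot \grad(v^2) + d\,|\grad v|^2.
\end{equation*}
Integrating over $\Omega$ and applying integration by parts to the middle term produces $-\tfrac{1-\beta}{2}\int v^2 \Delta d\,d\x$, the boundary contributions vanishing since $u\in H_0^1(\Omega)$. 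The hypothesis $-\Delta d \ge 0$ (weaker than convexity, interpreted distributionally) keeps this remainder nonnegative, so it may be discarded to give
\begin{equation*}
\int_\Omega d^\beta|\grad u|^2 d\x - \frac{(1-\beta)^2}{4}\int_\Omega \frac{|u|^2}{d^{2-\beta}}d\x \ge \int_\Omega d\,|\grad v|^2 d\x.
\end{equation*}

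It then remains to prove the one-sided inequality
\begin{equation*}
\int_\Omega d\,|\grad v|^2 d\x \ge C_{\alpha,\beta}\, D_{int}^{\beta-(\alpha+2)}\int_\Omega d^{\alpha+1-\beta}|v|^2 d\x.
\end{equation*}
For this I would drop the tangential part of $\grad v$ via $|\grad v|^2 \ge |\grad d \cdot \grad v|^2$, foliate $\Omega$ by the level sets of $d$ using the coarea formula, and reduce to a one-dimensional weighted Hardy inequality on $(0,L)$ with $L \le D_{int}/2$:
\begin{equation*}
\int_0^L r\,|\phi'(r)|^2 dr \ge C\int_0^L r^{\alpha+1-\beta}|\phi(r)|^2 dr,\q \phi(0)=0.
\end{equation*}
Rescaling $r = Ls$ extracts exactly the factor $L^{-(\alpha+2-\beta)}$, producing the stated $D_{int}^{\beta-(\alpha+2)}$ dependence together with the $2^{\alpha-\beta}$ prefactor. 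The sharp constant on $(0,1)$ follows from a standard comparison argument using a monomial test function of the form $s^{(\alpha+2-\beta)/2}$; the case split $\alpha\in(\beta-2,-1)$ versus $\alpha\in[-1,\infty)$ reflects which regime of that 1D calculation is optimal, sub-critical exponents giving $(\alpha+2-\beta)^2$ while super-critical ones require the modified bound $(1-\beta)(2\alpha+3-\beta)$.

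The main obstacle is making the integration by parts and the level-set foliation rigorous, since $d$ is merely Lipschitz on $\Omega$ and $\grad d$ fails to exist on the ridge (cut locus) of $\Omega$. This will be handled exactly as in \cite{FMT}: the identity is first established on a smooth tubular neighborhood $\Omega_\delta := \{\x\in\Omega:d(\x)<\delta\}$, where $d \in C^k$ by Lemma~14.6 of \cite{GT} and the level sets are smooth hypersurfaces, and one then passes to the limit $\delta\to 0$. The distributional interpretation of $-\Delta d \ge 0$ is enough to absorb any limiting interior boundary terms, which is precisely the extra generality over convexity that allows the theorem to apply to non-convex examples such as the Armitage--Kuran torus cited earlier.
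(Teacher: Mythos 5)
Your opening steps mirror the paper's: the substitution $u=d^{(1-\beta)/2}v$, the use of $|\grad d|^2=1$, and integration by parts to obtain
\begin{equation*}
\int_\Omega d^\beta|\grad u|^2\,d\x-\frac{(1-\beta)^2}{4}\int_\Omega d^{\beta-2}|u|^2\,d\x
=\frac{1-\beta}{2}\int_\Omega(-\Delta d)v^2\,d\x+\int_\Omega d|\grad v|^2\,d\x.
\end{equation*}
Up to here you agree with the paper. The gap is in what you do next. You discard the term $\tfrac{1-\beta}{2}\int(-\Delta d)v^2\,d\x$ and try to prove
\begin{equation*}
\int_\Omega d\,|\grad v|^2\,d\x\ \ge\ C_{\alpha,\beta}D_{int}^{\beta-(\alpha+2)}\int_\Omega d^{\alpha+1-\beta}|v|^2\,d\x.
\end{equation*}
That inequality is false, and so is the $1$-dimensional model inequality you reduce it to. Indeed, on $(0,L)$ with $\phi(0)=0$ take $\phi_a(r)=(r/\epsilon)^a$ for $r<\epsilon$ and $\phi_a=1$ for $r\ge\epsilon$; then $\int_0^L r|\phi_a'|^2\,dr=a/2\to 0$ as $a\to 0^+$, while $\int_0^L r^{\alpha+1-\beta}|\phi_a|^2\,dr$ stays bounded below by $\int_\epsilon^L r^{\alpha+1-\beta}\,dr>0$. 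No constant $C>0$ can work. (At the level of $\Omega$ this corresponds to $v$ nearly constant, e.g.\ $u=d^{(1-\beta)/2}$ for $\beta<0$, which lies in $H_0^1(\Omega)$ and makes $\int d|\grad v|^2$ arbitrarily small after approximation while $\int d^{\alpha+1-\beta}v^2$ does not vanish.) The term $\int(-\Delta d)v^2$ is therefore not a dispensable surplus; it carries the weight in exactly the configurations where $\grad v$ is small.

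What the paper actually does, following Filippas--Maz'ya--Tertikas, is to estimate $\int_\Omega d^{\alpha+1-\beta}v^2\,d\x$ by \emph{both} terms on the right of the identity. One uses the divergence relation
$\operatorname{div}(d^{\alpha-\beta+2}\grad d)=(\alpha-\beta+2)d^{\alpha-\beta+1}+d^{\alpha-\beta+2}\Delta d$,
multiplies by $v^2$, integrates by parts, and applies Cauchy--Schwarz with a free parameter $\delta>0$ together with $d\le R_{int}$ to get
\begin{equation*}
(\alpha+2-\beta-\delta)\int_\Omega d^{\alpha+1-\beta}v^2\,d\x
\le R_{int}^{\alpha+2-\beta}\Bigl(\tfrac{1}{\delta}\int_\Omega d|\grad v|^2\,d\x+\int_\Omega(-\Delta d)v^2\,d\x\Bigr).
\end{equation*}
Multiplying by $\delta$ and comparing with the identity requires $\delta\le(1-\beta)/2$ so that the coefficient of $\int(-\Delta d)v^2$ is controlled. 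Optimizing $\delta(\alpha+2-\beta-\delta)$ under the constraint $\delta\le\min\{(1-\beta)/2,\ (\alpha+2-\beta)/2\}$ is precisely what produces the two-branch constant $C_{\alpha,\beta}$: the unconstrained optimum $\delta=(\alpha+2-\beta)/2$ is admissible only for $\alpha\le -1$ (giving $(\alpha+2-\beta)^2$), while for $\alpha>-1$ the binding constraint $\delta=(1-\beta)/2$ yields $(1-\beta)(2\alpha+3-\beta)$. Your coarea/level-set route, besides resting on a false $1$D inequality, also offers no mechanism to reproduce this case split. To repair the proof you should restore the $-\Delta d$ term and carry out the divergence-identity estimate rather than attempting a pure weighted Hardy inequality for $\int d|\grad v|^2$.
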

\begin{proof} It will suffice to show the inequality for real-valued $u\in \Con(\Omega)$.
Let $u=d^{\frac{1-\beta}{2}}v$. Since $|\grad d|^2=1$, it follows
that
\begin{equation}\label{Identity}
\int_\Omega d^\beta|\grad u|^2d\x-\frac{(1-\beta)^2}{4}\int_\Omega
d^{\beta-2}u^2d\x =\frac{1-\beta}{2}\int_\Omega (-\Delta d)v^2d\x
+\int_\Omega d|\grad v|^2d\x.
\end{equation}
After noting the identity
$$
\int_\Omega d^\alpha u^2d\x=\int_\Omega d^{\alpha +1-\beta}v^2d\x
$$
we estimate the integral on the right-hand side for $\alpha
>\beta-2$ following a path similar to that of \cite{FMT} to arrive
at their inequality (3.4) and see that for this case
$$\begin{array}{rl}
(\alpha+2-\beta-\delta)\int_\Omega d^{\alpha +1-\beta} v^2d\x \le&
R_{int}^{\alpha+2-\beta}\left (\frac{1}{\delta}\int_{\Omega}d
|\grad v|^2d\x +\int_\Omega (- \Delta d)v^2d\x\right).
\end{array}
$$
Here $R_{int}:=\frac12 D_{int}(\Omega)$. Choose $\delta\le
\min\{\frac{1-\beta}{2}, \frac{\alpha+2-\beta}{2}\}$ and the
conclusion follows.

\end{proof}

If we know that $G$ in Corollary~\ref{CorAppl} is convex, then
$-\Delta d(\x)$ is a positive measure and we may apply
Theorem~\ref{FMT}. 

\begin{Corollary}\label{Corconvex} Assume the hypothesis of Corollary~\ref{CorAppl}. If for $\gamma\in(0,1)$ and $\alpha>\beta -2$
\begin{equation}\label{Qeq}
q_-(\x)\le
(1-\gamma)[\frac{(1-\beta)^2}{4d(\x)^{2-\beta}}+\lambda(G)
d(\x)^\alpha],\q \x\in G,
\end{equation}
for $G=(\Omega\setminus\Omega_{k_0})\cap N_S$ convex and
$\lambda(G)=C_{\alpha,\beta}/D_{int}^{\alpha-(\beta-2)}(G)$, then
$\ft$ is bounded below and closable and the spectrum of $\tilde T$
is purely discrete.

\end{Corollary}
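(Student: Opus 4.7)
The plan is to deduce this corollary directly from Corollary~\ref{CorAppl} by checking that its hypotheses are satisfied, using Theorem~\ref{FMT} as the source of the required Hardy inequality on $G$.

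First, I would observe that since $G=(\Omega\setminus\Omega_{k_0})\cap N_S$ is assumed convex, the distance function $d(\x)$ is superharmonic on $G$, i.e., $-\Delta d(\x)\ge 0$ on $G$, as recalled in the paragraph preceding Theorem~\ref{FMT}. This is precisely the hypothesis needed to apply Theorem~\ref{FMT} on the domain $G$ (rather than on $\Omega$), yielding for every $u\in H_0^1(G)$ and $\beta<1$, $\alpha>\beta-2$,
$$
\int_G d(\x)^\beta |\grad u|^2 d\x \ge \frac{(1-\beta)^2}{4}\int_G \frac{|u|^2}{d(\x)^{2-\beta}}d\x + C_{\alpha,\beta}D_{int}(G)^{\beta-(\alpha+2)}\int_G d(\x)^\alpha |u|^2 d\x.
$$

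Next, I would identify this as an instance of inequality (\ref{E1}) with $\kappa(\beta):=(1-\beta)^2/4>0$ and $\lambda(G):=C_{\alpha,\beta}/D_{int}(G)^{\alpha-(\beta-2)}$, which matches the stated value of $\lambda(G)$ in the corollary. With this identification, the pointwise bound (\ref{Qeq}) assumed on $q_-$ in $G$ is exactly the hypothesis (\ref{Qeq2}) of Corollary~\ref{CorAppl}.

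Finally, I would invoke Corollary~\ref{CorAppl}: its remaining hypotheses (namely $(H)$, $\nu_A\in L^\infty(\Omega\cap\Omega_k)$, the degeneracy bound (\ref{INEQQ}) $\mu_A(\x)\ge d(\x)^\beta$ on $(\Omega\setminus\Omega_{k_0})\cap N_S$, and the behavior of $q_-$ at infinity when $\Omega$ is unbounded) are carried over from the assumption ``Assume the hypothesis of Corollary~\ref{CorAppl}.'' The conclusion of Corollary~\ref{CorAppl} then gives directly that $\ft$ is bounded below and closable and that the spectrum of $\tilde T$ is purely discrete, which is what we wanted. Since the argument is essentially a verification that all hypotheses line up, there is no substantive obstacle; the only point worth being careful about is that Theorem~\ref{FMT} is applied on the subdomain $G$ rather than on $\Omega$, which is permissible because convexity of $G$ is what supplies $-\Delta d\ge 0$ there.
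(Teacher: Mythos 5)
Your proof matches the paper's own argument exactly: the paper's proof of this corollary is the one-liner ``The proof follows from Corollary~\ref{CorAppl} and Theorem~\ref{FMT},'' and your write-up is a faithful expansion of it — use convexity of $G$ to get superharmonicity of the distance function, apply Theorem~\ref{FMT} on $G$ to produce the Hardy inequality (\ref{E1}) with $\kappa(\beta)=(1-\beta)^2/4$ and the stated $\lambda(G)$, observe that (\ref{Qeq}) is then (\ref{Qeq2}), and invoke Corollary~\ref{CorAppl}. One small point glossed over both by you and by the paper: Theorem~\ref{FMT}, applied with $G$ playing the role of the domain, naturally yields an inequality in terms of $d(\x;G)=\operatorname{dist}(\x,\partial G)$ rather than $d(\x)=\operatorname{dist}(\x,\partial\Omega)$ as used throughout Corollary~\ref{CorAppl}; these coincide near $S$ but not on all of $G$, and a fully rigorous treatment would need to address this (the paper later flags exactly this $d$-versus-$d_{\Omega_\delta}$ distinction before Lemma~\ref{HardyB}), though since the paper's proof has the same gap your proposal cannot be faulted for it.
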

\begin{proof} The proof follows from Corollary~\ref{CorAppl} and Theorem~\ref{FMT}.
\end{proof}

In \cite{FMT2} Filippas, Maz'ya, and Tertikas prove a
Hardy-Sobolev inequality in a tubular domain
$\Omega_\delta:=\{\x\in\Omega: d(\x)<\delta\}$ for some
$\delta>0$. Here, we adapt some of those ideas to use as an
application of Corollary~\ref{CorThm2}. The next Lemma allows
application for the case in which $-\Delta d(\x)\ge 0$ in the
whole of a non-convex $\Omega$, but $G$ in Corollary~\ref{CorAppl}
is not convex and $d$ is not superharmonic in $G$. The prototype
for $\Omega$ in this case is the torus studied by Armitage and
Kuran~\cite{AK}.

It's important to note that in the next Lemma,
$d(\x)=d(\x;\Omega)$, the distance from $\x$ to $\partial\Omega$
as before, as opposed to the distance from $\x$ to $\partial
\Omega_\delta$, $d(\x;\Omega_\delta)$. We will use this additional
notation in some cases below to avoid confusion.

\begin{Lemma}\label{HardyB}
Assume that $\Omega$ is a bounded domain with a $C^2$ boundary and
$-\Delta d\ge 0$ in $\Omega_\delta$ for all $\delta>0$ sufficiently
small. Let $\beta<1$ and $\alpha >(\beta -3)/2$. If $0<\delta\le
\frac{1-\beta}{2}$ then for all $u\in\Con(\Omega)$
$$\begin{array}{rl}
\int_{\Omega_\delta} d^\beta|\grad
u|^2d\x-\frac{(1-\beta)^2}{4}\int_{\Omega_\delta}
d^{\beta-2}|u|^2d\x \ge& C(\alpha,\beta)\delta\int_{\Omega_\delta}
d^{\alpha} |u|^2d\x
\end{array}
$$
for a positive constant
\begin{equation}\label{Calphabeta}
C(\alpha,\beta):=\frac{2^{\alpha-\beta+1}(2\alpha-\beta+3)}{(1-\beta)^{\alpha-\beta+2}}.
\end{equation}
\end{Lemma}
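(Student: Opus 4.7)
The plan is to mimic the proof of Theorem~\ref{FMT} via the Hardy substitution $u = d^{(1-\beta)/2} v$, but with careful bookkeeping of the boundary contribution on $\{d = \delta\}$ created by restricting to the tubular neighborhood $\Omega_\delta$. Writing $\gamma := 1-\beta$ and $\tau := \alpha+2-\beta$, the hypothesis $\alpha > (\beta-3)/2$ is equivalent to $\tau > \gamma/2 > 0$, and $\delta \leq \gamma/2$ will be essential in the final scaling. Because $u \in \Con(\Omega)$, the transformed function $v$ vanishes near $\partial\Omega$ but not in general on the inner boundary $\{d = \delta\}$.

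First I would derive a master identity for the left-hand side. The usual pointwise computation (using $|\grad d|^2 = 1$) gives $d^\beta|\grad u|^2 - (\gamma^2/4)\, d^{\beta-2} u^2 = (\gamma/2)\,\grad d \cdot \grad(v^2) + d|\grad v|^2$. Integrating over $\Omega_\delta$ and applying the divergence theorem to the first summand (the outward unit normal to $\Omega_\delta$ on $\{d=\delta\}$ equals $\grad d$, while $v \equiv 0$ near $\partial\Omega$) yields
\[
  I \;=\; \tfrac{\gamma}{2}\,B \,+\, \tfrac{\gamma}{2}\,N \,+\, G,
\]
where $I$ denotes the left-hand side of the claim and $B := \int_{\{d=\delta\}} v^2\,dS$, $N := \int_{\Omega_\delta}(-\Delta d)\, v^2\,d\x$, $G := \int_{\Omega_\delta} d\,|\grad v|^2\,d\x$; all three are nonnegative thanks to $-\Delta d \geq 0$.

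Next I would run a second divergence-theorem computation on the vector field $d^\tau v^2\, \grad d$, producing
\[
  \tau\, J \;=\; \delta^\tau B \,+\, \int_{\Omega_\delta} d^\tau(-\Delta d)\, v^2\,d\x \,-\, 2\int_{\Omega_\delta} d^\tau v\, \grad d\cdot \grad v\, d\x,
\]
where $J := \int_{\Omega_\delta} d^\alpha u^2\, d\x = \int_{\Omega_\delta} d^{\tau-1} v^2\, d\x$. After estimating the cross term by the split $|2d^\tau v\,\grad d\cdot \grad v| \leq \lambda\, d^{2\tau-1} v^2 + \lambda^{-1}\, d|\grad v|^2$ and using $d \leq \delta$ (together with $\tau > 0$) to replace $d^{2\tau-1}$ by $\delta^\tau d^{\tau-1}$ and $d^\tau(-\Delta d)$ by $\delta^\tau(-\Delta d)$, I would choose $\lambda = \gamma/(2\delta^\tau)$. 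With this choice the combination $(B+N) + (2/\gamma)\, G$ on the right collapses to exactly $(2/\gamma)\,I$, giving $J \leq \frac{4\delta^\tau}{(1-\beta)(2\alpha+3-\beta)}\, I$.

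The final step is algebraic: invert and extract one factor of $\delta$ by writing $\delta^{-\tau} = \delta\cdot \delta^{-(\tau+1)} \geq \delta\,(2/(1-\beta))^{\tau+1}$, which is exactly where the hypothesis $\delta \leq (1-\beta)/2$ enters and where the power $(1-\beta)^{-(\alpha-\beta+2)}$ in $C(\alpha,\beta)$ arises. The main obstacle is the coordination of the two identities: the boundary contribution $\delta^\tau B$ generated by the second integration by parts must be matched against the $(\gamma/2)\,B$ in the master identity, and the same parameter $\lambda$ must simultaneously convert the $N$- and $G$-terms into multiples of $I$. The pair of choices $\lambda = \gamma/(2\delta^\tau)$ together with $\delta \leq (1-\beta)/2$ is precisely what makes this bookkeeping close up and produces the stated constant $C(\alpha,\beta)$ in (\ref{Calphabeta}).
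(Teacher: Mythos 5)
Your proof is correct and follows the paper's own approach: the substitution $u=d^{(1-\beta)/2}v$, the master identity with the inner-boundary term on $\{d=\delta\}$, a second integration by parts on $d^{\tau}v^{2}\grad d$, a weighted Cauchy--Schwarz estimate on the cross term, and the scaling step using $\delta\le(1-\beta)/2$. The only cosmetic difference is the choice of Cauchy--Schwarz parameter ($\lambda=\gamma/(2\delta^{\tau})$ rather than the paper's $\lambda=\delta$ followed by a second use of $\delta\le\gamma/2$), but both choices lead to exactly the constant $C(\alpha,\beta)$ in \eqref{Calphabeta}.
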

\begin{proof} Since $\Omega$ is bounded and $\partial\Omega\in C^2$, then $d\in C^2(\Omega_{\delta_*}\cap \overline{\Omega})$ for
some $\delta_*$ (Lemma~14.16 of \cite{GT}). We may assume without
loss of generality that $\delta_*=\delta\in
(0,\frac{1-\beta}{2})$. It will suffice to prove the inequality
for functions $u\in \Con(\Omega)$ that are real-valued and
nonnegative (Lieb \& Loss~\cite{LL}, pp.176-177). For
$u\in\Con(\Omega)$ and $u=d^{\frac{1-\beta}{2}}v$ it follows from
integrating by parts that
$$\begin{array}{rl}
\int_{\Omega_\delta} d^\beta|\grad u|^2d\x=&\frac{(1-\beta)^2}{4}
\int_{\Omega_\delta} d^{-1}v^2d\x
+\frac{1-\beta}{2} \int_{\Omega_\delta} (-\Delta d) v^2d\x\\
 &+ \frac{1-\beta}{2}\int_{\partial\Omega^c_\delta}(\grad d\cdot\nu) v^2ds
 +\int_{\Omega_\delta} d|\grad v|^2d\x
\end{array}
$$
since $|\grad d|=1$ where $\nu$ is the unit outward normal from
$\Omega_\delta$
 on $\partial\Omega_\delta \cap \Omega=\partial\Omega^c_\delta$. Since $\grad d\cdot \nu=1$ on $\partial\Omega^c_\delta$
 we have that
\begin{equation}\label{EqAA}\begin{array}{l}
\int_{\Omega_\delta} d^\beta|\grad
u|^2d\x-\frac{(1-\beta)^2}{4}\int_{\Omega_\delta}
d^{\beta-2}u^2d\x
\\
\ \ \ \ \ \ = \frac{1-\beta}{2}\int_{\Omega_\delta} (-\Delta
d)v^2d\x +\int_{\Omega_\delta} d|\grad v|^2d\x +
\frac{1-\beta}{2}\int_{\partial{\Omega}^c_\delta} v^2ds.
\end{array}
\end{equation}

In order to estimate $\int_{\Omega_\delta} d^\alpha u^2 d\x$ for
$\alpha>\frac{\beta-3}{2}>\beta-2$, we make the substitution
$u=d^{\frac{1-\beta}{2}}v$ again and use the identity
$$
div(d^{\alpha-\beta+2}\grad d)=(\alpha-\beta+2)d^{\alpha-\beta+1}
+d^{\alpha-\beta+2}\Delta d
$$
in $\Omega_\delta$. Multiply by $v^2$ and integrate by parts to see
that
\begin{equation}\label{Eqab}\begin{array}{rl}
(\alpha-\beta+2)\int_{\Omega_\delta} d^{\alpha -\beta+1} v^2d\x
=&-2\int_{\Omega_\delta} d^{\alpha-\beta+2}v\grad d\cdot \grad v\
d\x
+\delta^{\alpha-\beta+2}\int_{\partial\Omega_\delta^c} v^2ds\\
&+\int_{\Omega_\delta}d^{\alpha-\beta+2}(-\Delta d) v^2d\x\\
=:& I_1 + I_2 +I_3.
\end{array}
\end{equation}
Applying the Cauchy-Schwarz inequality to $I_1$ we have that
\begin{equation}\label{Eqa31}\begin{array}{rl}
I_1 \le & \delta \int_{\Omega_\delta} d^{\alpha -\beta+1} v^2d\x
+\delta^{\alpha-\beta+1}\int_{\Omega_\delta}d |\grad v|^2d\x
\end{array}
\end{equation}
since $d(\x)\in (0,\delta)$ in $\Omega_\delta$. It follows from
(\ref{Eqab}) and (\ref{Eqa31}) that
\begin{equation}\label{Eqac}\begin{array}{rl}
[(\alpha-\beta+2)-\delta]\int_{\Omega_\delta} d^{\alpha -\beta+1}
v^2d\x
\le & \delta^{\alpha -\beta+1}\int_{\Omega_\delta}d |\grad v|^2d\x\\
&+\delta^{\alpha-\beta+2}\int_{\Omega_\delta}(-\Delta d) v^2d\x\\
&+\delta^{\alpha-\beta+2}\int_{\partial\Omega_\delta^c} v^2ds
\end{array}
\end{equation}
since $-\Delta d(\x)\ge 0$ in $\Omega_\delta$.

Since $\delta\le \frac{1-\beta}{2}$ in (\ref{Eqac}) we will have
that
$$\begin{array}{rl}
\frac{\alpha-\beta+2-\delta}{\delta^{\alpha-\beta+1}}\int_{\Omega_\delta}
d^{\alpha -\beta+1} v^2d\x \le&\frac{1-\beta}{2}\int_{\Omega_\delta}
(-\Delta d)v^2d\x +\int_{\Omega_\delta} d|\grad v|^2d\x
+ \frac{1-\beta}{2}\int_{\partial{\Omega}^c_\delta} v^2ds\\
=&\int_{\Omega_\delta} d^\beta|\grad
u|^2d\x-\frac{(1-\beta)^2}{4}\int_{\Omega_\delta}
d^{\beta-2}u^2d\x
\end{array}
$$
according to (\ref{EqAA}). Finally, use the fact that
$$
C(\alpha,\beta)\delta \le
\frac{\alpha-\beta+2-\delta}{\delta^{\alpha-\beta+1}}
$$
to complete the proof.

\end{proof}

Next, we present a corollary to Lemma~\ref{HardyB} in which we can
use Theorem~\ref{closable} directly avoiding a convexity
assumption for $\Omega\setminus \Omega_{k_0}$. Then, we follow
with an application on a torus in $\R^3$ in which $-\Delta
d(\x)\ge 0$.

\begin{Corollary}\label{torus} Let $\Omega$ be a bounded domain with a $C^2$-boundary and let
$\mathfrak{h}[u,v]$ be given by (\ref{form}) with $\sigma\equiv 0$
and $\mathcal{D}(\mathfrak{h})=\Con(\Omega)$. Set
$S=\partial\Omega$ and define the $S$-admissible family of domains
by
$$
\Omega_k:=\Omega\setminus \overline{\Omega}_{\delta_k},\q
\delta_k=\frac{1}{k}
$$
for $k\in\mathbb{N}$.

 Assume (H)(a),(b),and (c); for some $\beta<1$
$$
\mu_A(\x)\ge d(\x)^\beta,\q \x\in \Omega_{\delta_k};
$$
and $\nu_A\in L^\infty(\Omega\cap\Omega_k)$ for $k$ sufficiently
large. Suppose for $\gamma\in (0,1)$ and $\alpha$ satisfying
$2\alpha -\beta +3>0$
$$
q_-(\x)\le (1-\gamma)[\frac{(1-\beta)^2}{4d(\x)^{2-\beta}}+ \delta
C(\alpha,\beta)\ d(\x)^\alpha],\q \x\in \Omega_{\delta_k}
$$
and $-\Delta d(\x)\ge 0$ in $\Omega_{\delta_k}$ for $k$
sufficiently large and $C(\alpha,\beta)$ defined in
(\ref{Calphabeta}). Then, $\mathfrak{h}$ is bounded below and
closable. The self-adjoint operator associated with $\mathfrak{h}$
has a purely discrete spectrum.
\end{Corollary}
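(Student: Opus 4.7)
The plan is to apply Theorem~\ref{closable} directly to $\mathfrak{h}$, using Lemma~\ref{HardyB} to verify the Hardy-type condition (\ref{Bdd1}), and then to obtain $\ell_e(\tilde{\mathfrak{h}})=\infty$ by using a $\gamma$-fraction of the same Hardy inequality.

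First, I would verify that $\{\Omega_k\}_{k=1}^\infty$ is an $S$-admissible family for $S=\partial\Omega$. Since $\partial\Omega\in C^2$, the distance function is $C^2$ in $\Omega_{\delta_*}$ for some $\delta_*>0$ (Lemma~14.6 of~\cite{GT}), hence $\partial\Omega_k\in C^2$ for $k$ large; properties (i)--(iii) follow from $\delta_k=1/k\to 0$, and the Rellich theorem gives (iv). Hypothesis~(H) holds: (a),(b),(c) are assumed and (d),(e) are vacuous since $\sigma\equiv 0$. Since $\Omega$ is bounded, condition (\ref{infty}) is not needed, and $\nu_A\in L^\infty(\Omega\cap\Omega_k)$ is assumed.

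Second, and central to the argument, I would verify (\ref{Bdd1}) with $N_S=\Omega_{\delta_{k_0}}$ for $k_0$ chosen large enough that $\delta_{k_0}\le (1-\beta)/2$ and $-\Delta d\ge 0$ on $\Omega_{\delta_{k_0}}$. For $k\ge k_0$ and $u\in\Con(\Omega)$, the set $(\Omega\setminus\Omega_k)\cap N_S$ equals $\Omega_{\delta_k}$, and $\mu_A(\x)\ge d(\x)^\beta$ yields $\langle A\grad u,\grad u\rangle\ge d^\beta|\grad u|^2$. Applying Lemma~\ref{HardyB} with $\delta=\delta_k$ gives
\begin{equation*}
\int_{\Omega_{\delta_k}} d^\beta|\grad u|^2 \, d\x \ge \frac{(1-\beta)^2}{4}\int_{\Omega_{\delta_k}} \frac{|u|^2}{d^{2-\beta}}\, d\x + C(\alpha,\beta)\,\delta_k\int_{\Omega_{\delta_k}} d^\alpha|u|^2 \, d\x,
\end{equation*}
and combined with the pointwise bound on $q_-$ this becomes
\begin{equation*}
(1-\gamma)\int_{\Omega_{\delta_k}}\langle A\grad u,\grad u\rangle\, d\x \ge \int_{\Omega_{\delta_k}} q_-|u|^2\, d\x,
\end{equation*}
which is exactly (\ref{Bdd1}). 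Theorem~\ref{closable} then yields that $\mathfrak{h}$ is bounded below and closable, and that $\ell_e(\tilde{\mathfrak{h}})$ is computed by (\ref{lsube}).

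To conclude that the spectrum is purely discrete, I would show $\ell_e=\infty$. For $u\in\Con(\Omega)$ with $\|u\|=1$ and $\supp u\subset\Omega\setminus\Omega_k=\Omega_{\delta_k}$, split $\int d^\beta|\grad u|^2 = \gamma\int d^\beta|\grad u|^2 + (1-\gamma)\int d^\beta|\grad u|^2$ and use the $(1-\gamma)$-piece to absorb $\int q_-|u|^2$ exactly as above; what remains is
\begin{equation*}
\mathfrak{h}[u] \ge \gamma\int_{\Omega_{\delta_k}} d^\beta|\grad u|^2\, d\x \ge \gamma\,\frac{(1-\beta)^2}{4}\int_{\Omega_{\delta_k}} d^{\beta-2}|u|^2\, d\x \ge \gamma\,\frac{(1-\beta)^2}{4}\,k^{2-\beta},
\end{equation*}
since $d(\x)<\delta_k=1/k$ on $\supp u$ and $\beta<1$. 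Letting $k\to\infty$ in (\ref{lsube}) forces $\ell_e=\infty$, so the essential spectrum is empty and the self-adjoint operator associated with $\mathfrak{h}$ has purely discrete spectrum. The main technical obstacle is ensuring that Lemma~\ref{HardyB}'s constraint $\delta\le (1-\beta)/2$ is compatible with the appearance of $\delta$ in the hypothesis on $q_-$ (which I interpret as $\delta_k$, or any fixed $\delta\ge\delta_k$ bounded by $(1-\beta)/2$); once the Hardy inequality is in place with the correct constants, the rest is algebraic bookkeeping.
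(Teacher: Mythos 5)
Your proof follows essentially the same route as the paper's: verify the $S$-admissibility and Hypothesis (H), invoke Lemma~\ref{HardyB} together with $\mu_A\ge d^\beta$ and the pointwise bound on $q_-$ to check condition~(\ref{Bdd1}), then apply Theorem~\ref{closable} to obtain boundedness below, closability, and formula~(\ref{lsube}), and finally use a $\gamma$-fraction of the Hardy term $\int d^{\beta-2}|u|^2$ with $d<1/k$ on $\Omega_{\delta_k}$ to send the infimum in~(\ref{lsube}) to $+\infty$. The paper's own proof is a condensed version of exactly this (it writes $\ft[\varphi]\ge\gamma\int_{\Omega_{\delta_k}}\langle A\grad\varphi,\grad\varphi\rangle\ge\gamma\int_{\Omega_{\delta_k}}\bigl[\tfrac{(1-\beta)^2}{4d^{2-\beta}}+\delta C(\alpha,\beta)d^\alpha\bigr]|\varphi|^2$ and concludes from $d\le 1/k$), so there is no meaningful divergence of method. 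Your closing remark about matching the $\delta$ in the hypothesis on $q_-$ with the $\delta$ allowed in Lemma~\ref{HardyB} is a fair reading of a genuine imprecision in the Corollary's statement; the absorption in~(\ref{Bdd1}) for all $k\ge k_0$ requires taking $\delta=\delta_k$ in Lemma~\ref{HardyB} (note that interpreting $\delta$ as a fixed value $\ge\delta_k$ would give the inequality in the wrong direction for absorption at scale $\delta_k$), which is consistent with the display in the paper's proof.
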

\begin{proof}
It follows from Theorem~\ref{closable} and Lemma~\ref{HardyB} that
$\mathfrak{h}$ is bounded below and closable as well as the fact
that (\ref{lsube}) holds. For $k\ge k_0$, $u\in \mathcal{D}(\ft)$
with $supp\ u\subset \Omega\setminus\Omega_k$ and $\varphi :=
u/\|u\|$
$$\begin{array}{rl}
\ft[\varphi]\ge&\gamma\int_{\Omega_{\delta_k}}<A(\x)\grad \varphi,\grad\varphi>d\x\\
\ge&
\gamma\int_{\Omega_{\delta_k}}[\frac{(1-\beta)^2}{4d(\x)^{2-\beta}}+
\delta C(\alpha,\beta)\ d(\x)^\alpha]|\varphi|^2d\x.
\end{array}
$$
Since $d(\x)\le \frac{1}{k}$ for $\x\in\Omega_{\delta_k}$,
$$\begin{array}{rl}
\ell_e =&\lim_{k\to\infty}\inf_{\varphi}\ft[\varphi]=\infty
\end{array}
$$
implying that the spectrum of the operator associated with the
closure of $\mathfrak{h}$ is discrete.
\end{proof}

\begin{Example}\label{Example} Let $\Omega\subset \R^3$ be the torus obtained by rotating the disc
$\omega =\{(0,y,z): (y-c)^2+z^2<R\}$, $c>2R$, about the $z$-axis.
Armitage and Kuran~\cite{AK} have shown that the distance function
$d_\Omega$ on the whole of $\Omega$ is superharmonic, i.e.,
$-\Delta d_\Omega(\x)\ge 0$ in $\Omega$ although $\Omega$ is not
convex.  Assuming the hypothesis of Corollary~\ref{torus}, the
operator associated with the Dirichlet form $\mathfrak{h}$ on the
torus $\Omega$ has a purely discrete spectrum.
\end{Example}

Of course, the Example~\ref{Example} can be extended to the image of
any unitary transformation of the torus described there since the
spectrum is preserved under such transformations. Note that the
distance function $d_{\Omega_\delta}(\x)$ in $\Omega_\delta$ for
small $\delta>0$
$$
d_{\Omega_\delta}(\x)=\left \{\begin{array}{cc} d_\Omega(\x),& \x\in\Omega_{\delta/2},\\
\delta-d_{\Omega}(\x),& \x\in
\Omega_\delta\setminus\Omega_{\delta/2}
\end{array}\right.
$$
is not superharmonic. Corollary~\ref{Corconvex} does not apply to
the torus of Example~\ref{Example} since $\Omega_\delta$ for
$\delta>0$ is not convex and $d_{\Omega_\delta}$ is not
superharmonic.

Finally, we refer the reader to recent results in \cite{BEL} where Hardy inequalities are given which exploit the interesting
connection between $\Delta d(\x)$ and the principal curvatures at the near point $\y\in\partial\Omega$ of $\x$. These new Hardy inequalities allow for applications of the results here to far more general non-convex domains such as the torus discussed above. Using a representation of $\Delta d$ in terms of principal curvatures, a new proof is given of Armitage and Kuran's result discussed in Example~\ref{Example}.

\bibliographystyle{amsalpha}

\end{document}